\documentclass[11pt,a4paper]{amsart}
\usepackage[T1]{fontenc}
\usepackage[utf8]{inputenc}
\usepackage{lmodern}
\usepackage{amsmath,amssymb,amsthm,mathtools}
\usepackage{microtype}
\usepackage[margin=27mm]{geometry}
\usepackage{enumitem}
\usepackage{xurl}
\usepackage[hidelinks,unicode,hypertexnames=false]{hyperref}
\hypersetup{pdftitle={Second-Order Spectral Loss under Conditional Expectations},
  pdfauthor={Trung Hoa Dinh, Cong Trinh Le, Trung Dung Vuong}}
\allowdisplaybreaks[1]
\DeclareMathOperator{\Tr}{Tr}
\DeclareMathOperator{\spec}{spec}

\theoremstyle{plain}
\newtheorem{theorem}{Theorem}[section]
\newtheorem{lemma}[theorem]{Lemma}
\newtheorem{proposition}[theorem]{Proposition}
\newtheorem{corollary}[theorem]{Corollary}
\theoremstyle{definition}
\newtheorem{definition}[theorem]{Definition}
\theoremstyle{remark}
\newtheorem{remark}[theorem]{Remark}
\newtheorem*{remark*}{Remark}

\title[Positive planar measures and corrected BMV]
{Positive Planar Measures for Spectral Pinching
and the Corrected BMV Conjecture}
\author{Trung Hoa Dinh}
\address{Department of Mathematics and Statistics, Troy University, Troy,
Alabama 36082, USA}
\email{thdinh@troy.edu}
\author{Cong Trinh Le}
\address{Department of Mathematics and Statistics, Quy Nhon University,
170 An Duong Vuong, Quy Nhon, Vietnam}
\email{lecongtrinh@qnu.edu.vn}
\author{Trung Dung Vuong}
\address{High School for the Gifted, VNUHCM, 153 Nguyen Chi Thanh, An Dong
Ward, Ho Chi Minh City, Vietnam; Vietnam National University Ho Chi Minh
City, Linh Xuan Ward, Ho Chi Minh City, Vietnam}
\email{vtdung@ptnk.edu.vn}
\subjclass[2020]{Primary 46L51, 47A55, 47A63; Secondary 47B10, 15A45, 60G42, 81P45}
\keywords{conditional expectation, second-order spectral loss,
spectral pinching, corrected BMV conjecture, positive planar measure,
tracial joint spectral measure, Lie--Trotter formula}
\date{}

\begin{document}
\begin{abstract}
We prove the corrected Bessis--Moussa--Villani pinching conjecture
for all symmetrized word averages. For Hermitian matrices $A,B$ and
$D=E_A(B)$, we construct a positive compactly supported measure
$\zeta_{A,B}$ on $\mathbb R^2$ such that
\[
\Tr f(xA+yB)-\Tr f(xA+yD)
=y^2\int_{\mathbb R^2}f''(xa+yb)\,d\zeta_{A,B}(a,b).
\]
Its total mass is $\frac12\|B-D\|_2^2$. This representation proves
the corrected conjecture, including its equality cases, and gives
a positive bivariate moment representation for the coefficient
differences. We also derive complete monotonicity of normalized
exponential pinching defects and a comparison with Lie--Trotter
approximations. A one-dimensional second-order loss measure is
obtained for trace-preserving conditional expectations on finite
tracial von Neumann algebras. Finally, we relate $\zeta_{A,B}$
to Hein\"avaara's tracial joint spectral measure by an explicit
positive radial transform.
\end{abstract}
\maketitle

\section{Introduction}

The Bessis--Moussa--Villani (BMV) conjecture arose in quantum
statistical mechanics \cite{BMV}. In its coefficient formulation
\cite{LiebSeiringer}, it asks for the nonnegativity of symmetrized
word traces in two positive semidefinite matrices. Stahl proved the
conjecture \cite{Stahl}. A later refinement proposed, in particular,
that the normalized trace average $\mathcal A_{n,m}(A,B)$ over all
words with $n$ letters $A$ and $m$ letters $B$ should be bounded above
by $\Tr(A^nB^m)$. Counterexamples in \cite{Cha} show that this upper
bound is false. For us, the failure raised a further question: what
makes these counterexamples possible, and is there a natural
comparison that remains valid?

The first author studied this question in \cite{Dinh}. The key issue
is that the clustered word $\Tr(A^nB^m)$ need not capture the common
commuting part of the matrices. Contributions from off-diagonal
spectral blocks can connect different eigenspaces, and their
arrangement in mixed words can defeat the clustered bound. This
suggested keeping the part of $B$ that commutes with $A$ and measuring
the contribution of what is discarded. If
$A=\sum_{\alpha\in\spec A}\alpha P_\alpha$, the natural map is the
spectral pinching
\[
 E_A(B)=\sum_{\alpha\in\spec A}P_\alpha BP_\alpha.
\]
It is also the trace-preserving conditional expectation onto the
commutant of $A$, a familiar operation in operator theory and quantum
information. Using this map, \cite{Dinh} proposed the
\emph{corrected BMV conjecture}
\begin{equation}\label{eq:intro-bmv}
 \mathcal A_{n,m}(A,B)\ge
 \mathcal A_{n,m}\bigl(A,E_A(B)\bigr),
 \qquad A,B\ge0,\quad n,m\ge0.
\end{equation}
The right-hand side equals $\Tr(A^nE_A(B)^m)$ and is nonnegative.
Thus, the conjecture strengthens the original BMV positivity while
avoiding the failed clustered upper bound. The case $m=2$ was proved
in \cite{Dinh}, but the general case remained open.

Our initial aim was to settle \eqref{eq:intro-bmv} for all $n,m$.
Instead of treating the word averages one at a time, we looked for an
object representing their entire difference. This led us back to a
more basic question: how much spectral information is lost when a
matrix is replaced by its conditional expectation? The correction
in \eqref{eq:intro-bmv} had identified a natural comparison, but a
proof for all coefficients seemed to require a representation of the
loss itself.

An important guide was the work of Otte Hein\"avaara
\cite{HeinTracial}. He constructed a positive tracial joint spectral
measure for every Hermitian pair $(A,B)$. Through an integral
transform, this measure represents trace functionals of the pencil
$xA+yB$ and leads, among other results, to the classical BMV theorem
and to $L^p$ models for two-dimensional Schatten geometry. We studied
this approach because it replaces many separate trace inequalities
by a single positive measure. However, \eqref{eq:intro-bmv} concerns
the \emph{difference} between the pencils $xA+yB$ and
$xA+yE_A(B)$. The existence of a positive measure for each pencil
does not imply that their difference is positive. The representation
in \cite{HeinTracial} therefore does not, by itself, establish the
corrected conjecture. We needed a measure adapted to conditional
expectation, rather than to a pair of matrices alone.

This became the main problem of the paper. Let $E$ be a
trace-preserving conditional expectation on a finite tracial von
Neumann algebra $(\mathcal M,\tau)$. For a bounded selfadjoint $X$,
trace Jensen gives $\tau f(X)\ge\tau f(EX)$ for every scalar convex
$f$ \cite{BrownKosaki,HaradaKosaki}. The first variation of the trace
functional at $EX$ vanishes in the direction $X-EX$, so the loss is
naturally second order. We show that there is a canonical positive
compactly supported measure $\nu_{E,X}$ such that
\begin{equation}\label{eq:intro-one}
 \tau f(X)-\tau f(EX)
 =\int_{\mathbb R}f''(s)\,d\nu_{E,X}(s),
 \qquad
 \nu_{E,X}(\mathbb R)=\frac12\|X-EX\|_2^2.
\end{equation}
This measure works for every suitable $C^2$ function $f$. The
one-dimensional formula is related to second-order spectral-shift
theory \cite{Koplienko,Peller,GPS,DykemaSkripka}. But positivity for
each individual direction does not answer the question that emerged
from the BMV problem: can all these losses be represented by one
\emph{positive measure on a plane}?

For spectral pinching, we prove that the answer is yes. Given
Hermitian $A,B$, put $D=E_A(B)$. There is a canonical positive
compactly supported measure $\zeta_{A,B}$ on $\mathbb R^2$ for which
\begin{equation}\label{eq:intro-planar}
 \Tr f(xA+yB)-\Tr f(xA+yD)
 =y^2\int_{\mathbb R^2}f''(xa+yb)\,d\zeta_{A,B}(a,b)
\end{equation}
for all real $x,y$ and suitable $C^2$ functions $f$. Its total mass
is $\frac12\|B-D\|_2^2$, and it vanishes exactly when $A$ and $B$
commute. The construction uses a selected-root Herglotz measure for a
quotient matrix pencil. Removing its atomic part, which is precisely
the part retained by pinching, leaves a positive measure representing
the discarded spectral information. The same planar measure gives all
the second-order losses in \eqref{eq:intro-planar}.

The corrected BMV conjecture is now a consequence of this formula.
For $A,B\ge0$, $n\ge0$, and $m\ge2$, we obtain
\begin{equation}\label{eq:intro-bmv-gap}
 \mathcal A_{n,m}(A,B)-\mathcal A_{n,m}(A,D)
 =m(m-1)\int_{\mathbb R^2}a^n b^{m-2}
 \,d\zeta_{A,B}(a,b)\ge0.
\end{equation}
The cases $m=0,1$ are equalities. Thus, the conjecture that
motivated the work follows from a result about spectral loss that
applies to much more than word averages. In particular, the formula
also gives positive mixed moments and complete monotonicity of the
normalized exponential pinching defect. This change of viewpoint, from proving
one family of inequalities to understanding the loss under pinching,
is the reason for the general formulation of the paper.

The relation with Hein\"avaara's theory is also more precise at this
level. His tracial joint spectral measure describes trace geometry
of a Hermitian pair. Our measure describes the \emph{relative
second-order loss} when one member of the pair is replaced by its
pinching. They are different objects, not two descriptions of the
same measure. Nevertheless, for positive semidefinite matrices, the
finite comparison measure obtained from Hein\"avaara's integral
representation is a positive radial transform of $\zeta_{A,B}$
(Theorem~\ref{thm:5.9}). This identifies a connection between the
two theories while making clear what additional positivity has to
be proved for the pinching comparison.

Section~\ref{sec:oned} establishes the one-dimensional loss formula.
Section~\ref{sec:pinching} constructs the positive planar measure for
spectral pinching. Section~\ref{sec:applications} proves the corrected
BMV conjecture and gives moment and Lie--Trotter consequences.
Section~\ref{sec:joint-spectral} compares the planar measure with
Hein\"avaara's tracial joint spectral measure.

\subsection*{Declaration of AI-assisted research and writing}

The use of AI in this research developed in stages.
The first author began by studying the failure of the
refined BMV conjecture. He used OpenAI's ChatGPT to
examine known counterexamples and explore possible
explanations for their failure. This led him to consider
the commuting part retained by spectral pinching,
formulate the corrected BMV conjecture in \cite{Dinh},
and prove its first case $m=2$. The mathematical
question and the choice of spectral pinching were
decisions of the first author.

For the present paper, the authors used ChatGPT as
a research tool to speed up the study of related
literature, mathematical calculations, and the
exploration and checking of possible arguments.
This included studying Hein\"avaara's approach
in \cite{HeinTracial} and investigating whether
the conditional-expectation idea from \cite{Dinh}
could lead to a positive second-order measure
representation.

The research direction, mathematical framework,
and organization of the paper were determined by
the authors. ChatGPT also assisted with manuscript
preparation and language editing. The authors
independently verified the mathematical statements,
proofs, and references, and take full responsibility
for the content of the paper.

\section{One-dimensional second-order loss under conditional expectations}
\label{sec:oned}

Let $(\mathcal M,\tau)$ be a finite tracial von Neumann algebra,
where $\tau$ is faithful and normal, with
$0<\tau(1)<\infty$.   
Let $E:\mathcal M\longrightarrow\mathcal N$
be the trace-preserving conditional expectation onto a
von Neumann subalgebra $\mathcal N$.
For a self-adjoint $X\in\mathcal M$, put $D=EX$.
We write $\|Y\|_2^2=\tau(Y^*Y)$.

\begin{definition}\label{def:4.1}
For $s\in\mathbb R$, set
\[
h_{E,X}(s)
=\tau\bigl((X-s1)_+\bigr)
-\tau\bigl((D-s1)_+\bigr).
\]
Define the measure $\nu_{E,X}$ by
\[
d\nu_{E,X}(s)=h_{E,X}(s)\,ds.
\]
\end{definition}

The function $h_{E,X}$ is the difference of the
stop-loss transforms of the spectral distributions
of $X$ and $D$. The measure $\nu_{E,X}$ is also
the Koplienko spectral shift measure associated
with the perturbation $X-D$ of $D$.

\begin{theorem}\label{thm:oned-loss}
The measure $\nu_{E,X}$ is finite, positive, and
compactly supported. For every $C^2$ function $f$
on an interval containing $\spec X$, we have
\begin{equation}\label{eq:18}
\tau f(X)-\tau f(D)
=\int_{\mathbb R}f''(s)\,d\nu_{E,X}(s).
\end{equation}
Moreover,
\begin{equation}\label{eq:19}
\nu_{E,X}(\mathbb R)
=\frac12\bigl(\tau(X^2)-\tau(D^2)\bigr)
=\frac12\|X-D\|_2^2.
\end{equation}
Consequently,
\[
\nu_{E,X}=0
\quad\Longleftrightarrow\quad X=EX.
\]
\end{theorem}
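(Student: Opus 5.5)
The plan is to use the Taylor/stop-loss representation of a $C^2$ function on a compact interval, and to obtain positivity of $h_{E,X}$ from trace Jensen applied to the convex functions $t\mapsto (t-s)_+$.

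First, compact support and positivity. Since $E$ is a unital positive map, $\spec D\subset[m,M]$, where $[m,M]$ is the convex hull of $\spec X$. For $s\ge M$ both terms in $h_{E,X}(s)$ vanish. For $s\le m$ we have $(X-s)_+=X-s$ and $(D-s)_+=D-s$, so
\[
h_{E,X}(s)=\tau(X)-\tau(D)=0
\]
by trace preservation. Hence $h_{E,X}$ is supported in $[m,M]$, and it is continuous (indeed Lipschitz in $s$), so $\nu_{E,X}$ is finite and compactly supported. For positivity, fix $s$ and apply trace Jensen $\tau g(X)\ge\tau g(EX)$ to the convex function $g(t)=(t-s)_+$; this gives $h_{E,X}(s)\ge0$. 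If one prefers a self-contained argument, positivity can be derived from the operator-convexity-free form $\tau g(EX)\le\tau E g(X)=\tau g(X)$. This form uses that $g(EX)\le E g(X)$ holds in trace for convex $g$, which is the cited Brown--Kosaki / Harada--Kosaki result.

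Next, the formula. For $t\in[m,M]$ and $f\in C^2$ on a neighborhood of $[m,M]$, Taylor's formula with integral remainder gives
\[
f(t)=f(m)+f'(m)(t-m)+\int_m^M f''(s)(t-s)_+\,ds .
\]
Apply this through functional calculus to $X$ and to $D$, then take traces. Interchanging $\tau$ with the $ds$-integral is justified because $s\mapsto (X-s)_+$ is norm-continuous and $\tau$ is normal and bounded. The constant terms cancel. The linear terms cancel because $\tau(X)=\tau(D)$. What remains is $\int f''(s)h_{E,X}(s)\,ds$, which is \eqref{eq:18}.

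For the mass, take $f(t)=t^2/2$ in \eqref{eq:18}, which gives $\nu_{E,X}(\mathbb R)=\tfrac12(\tau X^2-\tau D^2)$. Then expand
\[
\|X-D\|_2^2=\tau X^2-2\tau(XD)+\tau D^2 .
\]
Since $D\in\mathcal N$ and $E$ is a trace-preserving $\mathcal N$-bimodule map, $\tau(XD)=\tau(E(XD))=\tau(E(X)D)=\tau(D^2)$. Substituting yields \eqref{eq:19}. The equivalence then follows: $\nu_{E,X}=0$ iff $\|X-D\|_2=0$ (since $\nu_{E,X}\ge0$), and by faithfulness of $\tau$ this holds iff $X=EX$.

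The main obstacle is only the justification of positivity, that is, invoking trace Jensen correctly for the non-smooth convex functions $(t-s)_+$ on possibly non-matrix $\mathcal M$. If the cited result is stated only for continuous convex $f$ on an interval, this is fine as stated. Otherwise, I would approximate $(t-s)_+$ uniformly on $[m,M]$ by smooth convex functions and pass to the limit using norm continuity of functional calculus.
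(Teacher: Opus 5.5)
Your argument is correct and, apart from the positivity step, coincides with the paper's proof. The support and Lipschitz argument, the Taylor formula with hinge remainder based at $\lambda_{\min}(X)$, the test function $t^2/2$ and the use of faithfulness are all the same, and your bimodule computation of $\tau(XD)=\tau(D^2)$ is equivalent to the paper's orthogonal-projection remark.

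The difference is positivity. You cite trace Jensen for the convex function $(t-s)_+$, whereas the paper proves this special case directly:
\begin{itemize}
\item $\tau((D-s1)_+)=\sup\{\tau((D-s1)H):H\in\mathcal N,\ 0\le H\le 1\}$, because the maximizer $\mathbf 1_{(0,\infty)}(D-s1)$ lies in $\mathcal N$.
\item For such $H$, the bimodule property and $\tau\circ E=\tau$ give $\tau((D-s1)H)=\tau((X-s1)H)$.
\item Letting $H$ range over all of $\mathcal M$ can only increase the supremum, which then equals $\tau((X-s1)_+)$.
\end{itemize}

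The paper's route is self-contained and uses only the defining properties of $E$. It also removes your worry about the exact hypotheses of the cited theorem (non-smooth $g$, arbitrary subalgebras $\mathcal N$). Combined with the Taylor formula, it even reproves trace Jensen for $C^2$ convex $f$ rather than assuming it.

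Your route is shorter and legitimate given the literature; the paper quotes the same result in its introduction. However, the fallback you call self-contained is not. The inequality $\tau g(EX)\le\tau E g(X)$ is just the cited trace inequality restated. The operator inequality $g(EX)\le E g(X)$, which would make it elementary, fails in general for $g(t)=(t-s)_+$, since this function is not operator convex. If you want independence from the literature, the variational argument above is the natural replacement.
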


\begin{proof}
For every $s\in\mathbb R$, the variational formula
for the positive part gives
\begin{align*}
\tau\bigl((D-s1)_+\bigr)
&=\sup_{\substack{0\le H\le1\\H\in\mathcal N}}
  \tau\bigl((D-s1)H\bigr)\\
&=\sup_{\substack{0\le H\le1\\H\in\mathcal N}}
  \tau\bigl((X-s1)H\bigr)\\
&\le\tau\bigl((X-s1)_+\bigr).
\end{align*}
Indeed, for every $H\in\mathcal N$, the bimodule
property of $E$ gives
\[
E((X-s1)H)=(EX-s1)H=(D-s1)H.
\]
Since $\tau\circ E=\tau$, it follows that
\[
\tau((X-s1)H)=\tau((D-s1)H).
\]
The last inequality follows from the variational
formula for the positive part in $\mathcal M$.
Thus, $h_{E,X}(s)\ge0$ for every $s\in\mathbb R$.

Since $E$ is unital and positive,
\[
\lambda_{\min} (X)1\le D\le \lambda_{\max} (X)1.
\]
If $s\ge\max\spec X$, then
\[
(X-s1)_+=(D-s1)_+=0.
\]
If $s\le\min\spec X$, then
\[
\begin{aligned}
&\tau\bigl((X-s1)_+\bigr)
-\tau\bigl((D-s1)_+\bigr) =\tau(X)-\tau(D)=0.
\end{aligned}
\]
Furthermore, for every $r,s,t\in\mathbb R$,
\[
|(t-s)_+-(t-r)_+|\le|s-r|.
\]
By functional calculus,
\[
|h_{E,X}(s)-h_{E,X}(r)|
\le2\tau(1)|s-r|.
\]
Hence $h_{E,X}$ is continuous and vanishes
outside the spectral interval of $X$.
Therefore, $\nu_{E,X}$ is a finite positive
measure with compact support.

Let $L=\lambda_{\min} (X)$ and $U=\lambda_{\max} (X)$.
For every $t\in[L,U]$, Taylor's formula gives
\[
f(t)=f(L)+f'(L)(t-L)
+\int_L^U(t-s)_+f''(s)\,ds.
\]
Applying functional calculus to $X$ and $D$,
taking traces, and using $\tau(X)=\tau(D)$,
we obtain
\[
\begin{aligned}
\tau f(X)-\tau f(D)
&=\int_L^U f''(s)
\bigl[
\tau\bigl((X-s1)_+\bigr) 
-\tau\bigl((D-s1)_+\bigr)
\bigr]\,ds\\
&=\int_{\mathbb R}f''(s)\,d\nu_{E,X}(s).
\end{aligned}
\]
Here the interchange of trace and integration
is justified by the boundedness of $f''$
on $[L,U]$ and the finiteness of $\tau$.
This proves \eqref{eq:18}.

Taking $f(t)=t^2/2$ in \eqref{eq:18} gives
\[
\nu_{E,X}(\mathbb R)
=\frac12\bigl(\tau(X^2)-\tau(D^2)\bigr).
\]
Since $E$ is the orthogonal projection of
$L^2(\mathcal M,\tau)$ onto
$L^2(\mathcal N,\tau)$, we have
\[
\tau(XD)=\tau(D^2).
\]
Consequently,
\[
\|X-D\|_2^2
=\tau(X^2)-\tau(D^2),
\]
which proves \eqref{eq:19}.
The last assertion follows from the
faithfulness of $\tau$.
\end{proof}

\section{Spectral pinching and positive planar compatibility}
\label{sec:pinching}

Section~\ref{sec:oned} provides a positive measure
representing the second-order trace loss under a
conditional expectation. We now consider spectral
pinching and prove that the trace losses for an entire
two-parameter family of Hermitian matrix pencils
can be represented by a single positive measure
on $\mathbb R^2$.


Let $\Tr$ denote the usual matrix trace, and write
$\|X\|_2^2=\Tr(X^*X)$. For a matrix $M\in M_d(\mathbb C)$,
let $\lambda_1(M),\ldots,\lambda_d(M)$ denote its
eigenvalues, counted with algebraic multiplicity. For a Hermitian matrix
$A=\sum_{\alpha\in\spec A}\alpha P_\alpha,$
the spectral pinching map is defined by
\[
E_A(X)=\sum_{\alpha\in\spec A}P_\alpha XP_\alpha.
\]
This is the trace-preserving conditional expectation
onto the commutant $\{A\}'$.
For Hermitian matrices $A,B$, put $D=E_A(B)$.

For $a\notin\spec A$ and $b\in\mathbb R$, define
\begin{equation}\label{eq:1}
k_{A,B}(a,b)
=\frac{1}{2\pi}\sum_{j=1}^d
\left|
\Im\lambda_j\bigl((B-bI)(A-aI)^{-1}\bigr)
\right|.
\end{equation}
Set
\[
d\zeta_{A,B}(a,b)=k_{A,B}(a,b)\,da\,db.
\]
Notice that the values of $k_{A,B}$ on the lines
$a\in\spec A$ do not affect this measure.

The main theorem of the paper is as follows.
\begin{theorem}\label{thm:2.1}
The measure $\zeta_{A,B}$ is finite, positive,
and supported in
\[
[\lambda_{\min}(A),\lambda_{\max}(A)]
\times
[\lambda_{\min}(B),\lambda_{\max}(B)].
\]
For every $x,y\in\mathbb R$ and every $C^2$
function $f$ defined on an interval containing
the spectrum of $xA+yB$, we have
\begin{equation}\label{eq:2}
\Tr f(xA+yB)-\Tr f(xA+yD)
=y^2\int_{\mathbb R^2}
f''(xa+yb)\,d\zeta_{A,B}(a,b).
\end{equation}
Moreover,
\begin{equation}\label{eq:3}
\zeta_{A,B}(\mathbb R^2)
=\frac12\|B-D\|_2^2.
\end{equation}
Consequently, $\zeta_{A,B}=0$ if and
only if $AB=BA$.
\end{theorem}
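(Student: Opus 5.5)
The plan is to reduce the planar identity \eqref{eq:2} to the one-dimensional Theorem~\ref{thm:oned-loss}, one line at a time, through a Radon-transform computation. Positivity of $\zeta_{A,B}$ is immediate from \eqref{eq:1}.

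\emph{Support.} If $a\notin[\lambda_{\min}(A),\lambda_{\max}(A)]$, then $A-aI$ is definite. Hence $(B-bI)(A-aI)^{-1}$ is similar to $\pm|A-aI|^{-1/2}(B-bI)|A-aI|^{-1/2}$, which is Hermitian, so $k_{A,B}(a,b)=0$. If $b\notin[\lambda_{\min}(B),\lambda_{\max}(B)]$, then $B-bI$ is definite. Hence the matrix is similar to $\pm|B-bI|^{1/2}(A-aI)^{-1}|B-bI|^{1/2}$, which is Hermitian with real spectrum, and again $k_{A,B}=0$.

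\emph{Reduction to one dimension.} For $y=0$ both sides of \eqref{eq:2} vanish, so fix $x$ and $y\neq0$. Put $X=xA+yB$ and $E=E_A$. Since $E$ fixes $A$, we have $EX=xA+yD$. Theorem~\ref{thm:oned-loss} gives
\[
\Tr f(X)-\Tr f(EX)=\int f''(s)\,h_{E,X}(s)\,ds .
\]
It therefore suffices to show that the pushforward of $y^2\zeta_{A,B}$ under $(a,b)\mapsto xa+yb$ has density $h_{E,X}$.

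Parametrize the line $xa+yb=s$ by $a$, so that $da\,db$ restricted to the line contributes $da/|y|$. On this line
\[
(B-bI)(A-aI)^{-1}=\frac1y\Bigl((X-sI)(A-aI)^{-1}-xI\Bigr).
\]
Hence the imaginary parts of its eigenvalues are those of $(X-sI)(A-aI)^{-1}$ divided by $y$. The two factors of $1/|y|$ cancel the $y^2$. Writing $Y=X-sI$, and noting $E(Y)=EX-sI$, everything reduces to the following key identity for an arbitrary Hermitian $Y$:
\begin{equation*}
\frac1{2\pi}\int_{\mathbb R}\sum_{j=1}^d\bigl|\Im\lambda_j\bigl(Y(A-aI)^{-1}\bigr)\bigr|\,da
=\Tr Y_+-\Tr E_A(Y)_+ .
\end{equation*}

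\emph{Main obstacle: the key identity.} The eigenvalues $\lambda$ of $Y(A-aI)^{-1}$ are the roots of the pencil $\det(Y-\lambda(A-aI))=0$. Nonreal roots come in conjugate pairs, so the left side equals $\frac1\pi\int\sum_{\Im\lambda_j>0}\Im\lambda_j\,da$. I would first write the selected-root sum $\sum_{\Im\lambda_j>0}\lambda_j$ as a contour integral
\[
\frac1{2\pi i}\oint\lambda\,\partial_\lambda\log\det\bigl(Y-\lambda(A-aI)\bigr)\,d\lambda
\]
over a large upper half-disc. This exhibits it as a Herglotz-type function of $a$ off $\spec A$. I would then integrate in $a$, interchange the integrals, and evaluate by Stieltjes inversion together with the large-$|a|$ asymptotics. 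The continuous part should produce $\Tr Y_+$, i.e.\ the count of the positive spectrum of $Y$ via the limiting behaviour as $\lambda\to\infty$ along the real axis. The atoms at $a=\alpha\in\spec A$ should produce the Schur-complement contributions $\Tr(P_\alpha YP_\alpha)_+$, whose sum is $\Tr E_A(Y)_+$. Removing these atoms is exactly the subtraction of the pinched term.

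This bookkeeping at the poles $a\in\spec A$, together with justifying the interchange near the degenerate points where roots coalesce or escape to infinity, is where I expect the real work. As a fallback, I would check the identity on polynomial test functions: compare $y^2$-coefficients of $\Tr(xA+yB)^n-\Tr(xA+yD)^n$ with moments $n(n-1)\int(xa+yb)^{n-2}\,d\zeta_{A,B}$, using Cramér--Wold to conclude that the Radon projections of $y^2\zeta_{A,B}$ match.

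\emph{Consequences.} Once \eqref{eq:2} holds, taking $f(t)=t^2/2$ and $y=1$, $x=0$ gives
\[
\zeta_{A,B}(\mathbb R^2)=\tfrac12\bigl(\Tr B^2-\Tr D^2\bigr)=\tfrac12\|B-D\|_2^2 ,
\]
by the $L^2$-projection argument of Theorem~\ref{thm:oned-loss}. In particular $\zeta_{A,B}$ is finite, which proves \eqref{eq:3}. Finally, $\zeta_{A,B}=0$ if and only if $B=E_A(B)$, which holds if and only if $B$ lies in the commutant $\{A\}'$, i.e.\ $AB=BA$.
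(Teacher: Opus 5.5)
Your outer framework is sound. The support argument is correct. Reducing \eqref{eq:2} to Theorem~\ref{thm:oned-loss} by slicing along the lines $xa+yb=s$ is a valid alternative to the paper's route; the paper proves the case $x=0$, $y=1$ by a Taylor argument and then transports it by the shear identity \eqref{eq:14} and rescaling. Your line identity $(B-bI)(A-aI)^{-1}=y^{-1}\bigl((X-sI)(A-aI)^{-1}-xI\bigr)$ and the Jacobian bookkeeping are right. The deduction of \eqref{eq:3} and of the commutation criterion from \eqref{eq:2} is also fine.

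But everything then rests on your key identity. Up to scaling, it is exactly the marginal identity \eqref{eq:12} applied to $B+tA$, and it is the whole substance of the theorem. You do not prove it. Your fallback is circular: comparing moments requires computing $\int(xa+yb)^{n-2}k_{A,B}\,da\,db$ from the eigenvalue formula, which is no easier than the identity itself.

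The sketch you give would not work as written. A contour integral in $\lambda$ at real $a$ does not produce a Herglotz function, and real roots can sit on the contour. The Herglotz structure comes from complexifying the spectral variable $a\to z$.

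The missing idea is the non-crossing property. For $\Im z>0$ and nonzero $u\in\operatorname{ran}Y$, one has $\Im\langle u,(zI-A)^{-1}u\rangle<0$. Hence the nonzero eigenvalues of $Y(zI-A)^{-1}$ never meet $\mathbb R$, and exactly $\operatorname{rank}Y_+$ of them stay in the lower half-plane. Their sum $m(z)$ is then holomorphic with $\Im m\le0$, by Riesz projections. The remaining steps, all supplied in the paper, are as follows.
\begin{enumerate}
\item The total mass $\Tr Y_+$ of the Herglotz measure comes from the expansion $m(z)=\Tr Y_+/z+O(z^{-2})$ at infinity. It does not come from ``the continuous part'', as you write. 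If the continuous part gave $\Tr Y_+$ and the atoms were added on top, you would get the wrong identity.
\item The atoms are $\Tr(P_\alpha YP_\alpha)_+$, obtained from $\varepsilon(\alpha+i\varepsilon-A)^{-1}\to-iP_\alpha$. These are compressions, not Schur complements.
\item A singular continuous part must be excluded. The paper does this using that $m$ is algebraic.
\item The boundary density must be identified with $\frac1{2\pi}\sum_j|\Im\lambda_j|$. This uses that nonreal eigenvalues at real $a$ come in conjugate pairs, and that the selected eigenvalues converge to the lower-half-plane member of each pair.
\end{enumerate}
Only after these steps does ``total mass minus atoms'' give $\Tr Y_+-\Tr E_A(Y)_+$. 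This is Lemma~\ref{lem:2.4}. Without it, your proposal is a correct reduction plus an outline, not a proof.
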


To prove Theorem~\ref{thm:2.1}, we need some
preliminary results. Our approach is related to
the work of Hein\"avaara \cite{HeinTracial},
who constructed a positive tracial joint spectral
measure for a pair of Hermitian matrices.
The main difference is that we consider the trace
difference between $xA+yB$ and $xA+yE_A(B)$,
rather than the trace functional of a single
matrix pencil. The positivity of the tracial
joint spectral measure does not directly imply
the positivity of this difference.

Our proof uses the Herglotz measure associated
with a quotient matrix pencil. We first identify
its atomic masses in terms of the spectral
compressions onto the eigenspaces of $A$.
Removing these atoms gives a positive measure
whose marginal represents the trace loss under
spectral pinching. We then use the shear identity
for matrix pencils to obtain the representation
for all real $x,y$.


For $\Im z>0$, let $m_{S,T}(z)$ denote the sum
of the eigenvalues of $S(zI-T)^{-1}$ 
lying in the open lower half-plane, counted
with algebraic multiplicity. If there are
no such eigenvalues, we set $m_{S,T}(z)=0$. We write
$
S_+=\frac12(S+|S|)$, where $|S|=(S^*S)^{1/2},$
for the positive part of a   matrix $S$.

\begin{lemma}\label{lem:2.4}
For Hermitian $S,T\in M_d(\mathbb C)$, there
exists a finite positive measure $\nu_{S,T}$
on $\mathbb R$ such that
\begin{equation}\label{eq:6}
m_{S,T}(z)
=\int_{\mathbb R}\frac{d\nu_{S,T}(v)}{z-v},
\qquad \Im z>0.
\end{equation}
Moreover,
\begin{equation}\label{eq:7}
\operatorname{supp}\nu_{S,T}
\subseteq [\lambda_{\min}(T),\lambda_{\max}(T)],
\qquad
\nu_{S,T}(\mathbb R)=\Tr S_+,
\end{equation}
and
\begin{equation}\label{eq:8}
\int_{\mathbb R}v\,d\nu_{S,T}(v)
=\Tr(S_+T).
\end{equation}
For each $\lambda\in\spec T$, let $P_\lambda$
be the corresponding spectral projection.
Then
\begin{equation}\label{eq:9}
\nu_{S,T}(\{\lambda\})
=\Tr(P_\lambda SP_\lambda)_+.
\end{equation}
These are the only atoms of $\nu_{S,T}$.
The measure has no singular continuous part,
and its absolutely continuous part satisfies
\begin{equation}\label{eq:10}
\frac{d\nu_{S,T}^{\mathrm{ac}}}{dv}
=\frac1{2\pi}\sum_{j=1}^d
\left|
\Im\lambda_j\bigl(S(vI-T)^{-1}\bigr)
\right|
\end{equation}
for almost every $v\notin\spec T$.
\end{lemma}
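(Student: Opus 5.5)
The plan is to show that $m_{S,T}$ is a Nevanlinna-type function, to obtain $\nu_{S,T}$ from the Nevanlinna representation, and then to read off every listed property from the behaviour of $m_{S,T}$ at $\infty$, near the real axis off $\spec T$, and near each $\lambda\in\spec T$.

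\emph{Step 1: the eigenvalue split.} Fix $\Im z>0$ and let $\mu$ be an eigenvalue of $S(zI-T)^{-1}$ with eigenvector $w$. Put $u=(zI-T)^{-1}w$, so that $Su=\mu(zI-T)u$. Pairing with $u$ gives
\[
u^*Su=\mu\,c,\qquad c=z|u|^2-u^*Tu,\qquad \Im c=\Im z\,|u|^2>0 .
\]
Since $u^*Su$ is real, $\mu=u^*Su/c$. Hence either $\mu=0$, or $\Im\mu\neq0$ with the sign of $-u^*Su$. In particular, no nonzero eigenvalue is real. As $z$ moves through the connected upper half-plane, eigenvalues therefore cannot pass between the open lower half-plane and the rest of the spectrum. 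So the lower-half-plane eigenvalues form a spectrally isolated group of locally constant size.

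Evaluating at $z=it$ with $t\to\infty$ shows that this size is the number of positive eigenvalues of $S$. Indeed, $S(itI-T)^{-1}=S/(it)+O(t^{-2})$, and $\lambda/(it)=-i\lambda/t$ lies in the lower half-plane exactly when $\lambda>0$. Consequently $m_{S,T}(z)=\Tr\bigl(\Pi(z)S(zI-T)^{-1}\bigr)$, where $\Pi(z)$ is the Riesz projection of this group. Thus $m_{S,T}$ is holomorphic on the upper half-plane, satisfies $\Im m_{S,T}\le0$, and is an algebraic function of $z$.

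\emph{Step 2: Nevanlinna representation, mass and first moment.} By Step 1, $-m_{S,T}$ is a Nevanlinna function. We expand the eigenvalue group at infinity by first-order perturbation theory for the positive spectral projection $P_+$ of $S$, viewing $S(zI-T)^{-1}=S/z+ST/z^2+O(z^{-3})$ as a perturbation of $S/z$. This gives
\[
m_{S,T}(z)=\frac{\Tr(P_+S)}{z}+\frac{\Tr(P_+STP_+)}{z^2}+O(z^{-3})
=\frac{\Tr S_+}{z}+\frac{\Tr(S_+T)}{z^2}+O(z^{-3}).
\]
The $O(z^{-1})$ decay excludes the linear and constant terms in the Nevanlinna representation. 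Hence \eqref{eq:6} holds with a finite positive measure of mass $\Tr S_+$. The $z^{-2}$ coefficient then yields \eqref{eq:8}, after the support statement from Step 3 is in place.

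\emph{Step 3: support and absolutely continuous density.} We use Stieltjes inversion, $d\nu_{S,T}=-\frac1\pi\lim_{\varepsilon\downarrow0}\Im m_{S,T}(v+i\varepsilon)\,dv$ in the weak sense. First, $m_{S,T}$ is locally bounded near every real $v\notin\spec T$, since the eigenvalues of $S(zI-T)^{-1}$ stay bounded there. Therefore no mass sits off $\spec T$ except an absolutely continuous part. Moreover, $m_{S,T}$ is algebraic, so its boundary values are continuous away from finitely many points, which rules out any singular continuous part.

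For $v$ outside $[\lambda_{\min}(T),\lambda_{\max}(T)]$, the matrix $(vI-T)^{-1}$ is definite. Then $S(vI-T)^{-1}$ is similar to a Hermitian matrix, its eigenvalues are real, and the boundary value of $\Im m_{S,T}$ vanishes; this gives the support claim in \eqref{eq:7}. For other $v\notin\spec T$, the spectrum of $S(vI-T)^{-1}$ is invariant under conjugation, because $S(vI-T)^{-1}=S\bigl((vI-T)^{-1}S\bigr)S^{-1}$ generically and the adjoint is $(vI-T)^{-1}S$. By continuity of the sum of imaginary parts (real eigenvalues contribute zero), the limit from above gives
\[
\lim_{\varepsilon\downarrow0}\Im m_{S,T}(v+i\varepsilon)=-\tfrac12\sum_j\bigl|\Im\lambda_j\bigl(S(vI-T)^{-1}\bigr)\bigr|,
\]
and Stieltjes inversion turns this into \eqref{eq:10}.

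\emph{Step 4: the atoms.} For $\lambda\in\spec T$ the atom is $\nu(\{\lambda\})=\lim_{\varepsilon\downarrow0}(i\varepsilon)\,m_{S,T}(\lambda+i\varepsilon)$. Near $\lambda$ we have $(z-\lambda)S(zI-T)^{-1}=SP_\lambda+O(z-\lambda)$. The nonzero eigenvalues of $SP_\lambda$ coincide with those of $P_\lambda SP_\lambda$. An eigenvalue $\kappa/(i\varepsilon)$ lies in the lower half-plane exactly when $\kappa>0$, so the group coming from the positive eigenvalues of $P_\lambda SP_\lambda$ contributes $\Tr(P_\lambda SP_\lambda)_+$. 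The remaining eigenvalues of $(z-\lambda)S(zI-T)^{-1}$ are $o(1)$, so they contribute nothing to the limit. This proves \eqref{eq:9}, and Step 3 shows there are no other atoms.

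The main obstacle is the regularity analysis in Steps 3 and 4. One must control the eigenvalues that tend to $0$ or to the real axis, which is where the lower-half-plane group can touch the rest of the spectrum. The delicate points are showing that these eigenvalues contribute neither extra atoms nor a singular part, and that the boundary limit really selects exactly half of $\sum_j|\Im\lambda_j|$. I would handle this with Puiseux expansions of the algebraic eigenvalue branches near the finitely many exceptional points.
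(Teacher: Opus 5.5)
Your route is the paper's: a Herglotz function built from the lower-half-plane eigenvalue group, the expansion at infinity for the mass and first moment, Stieltjes inversion off $\spec T$, and the rescaling $(z-\lambda)S(zI-T)^{-1}\to SP_\lambda$ for the atoms. One step in Step~1 is not justified as written.

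Your pairing identity shows that a \emph{nonzero} eigenvalue of $S(zI-T)^{-1}$ is never real. To conclude that the lower group cannot exchange members with the rest of the spectrum, you must also exclude an eigenvalue crossing the axis through $0$. The identity only sees eigenvectors, so it shows that the \emph{geometric} multiplicity of $0$ is $\dim\ker S$. It does not exclude a Jordan block at $0$ for some $z_0$. If one occurred, the algebraic multiplicity of $0$ would jump at $z_0$, and nearby small nonzero eigenvalues could a priori lie in either half-plane. Then the size of the group, and hence the holomorphy of $m_{S,T}$, would not follow.

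The same trick closes this. Suppose $S(zI-T)^{-1}w_1=0$ and $S(zI-T)^{-1}w_2=w_1\neq0$, and set $u_i=(zI-T)^{-1}w_i$. Then $Su_1=0$ and $Su_2=(zI-T)u_1$. Hence $0=(Su_1)^*u_2=u_1^*(zI-T)u_1$, whose imaginary part is $\Im z\,\|u_1\|^2>0$, a contradiction. So $0$ is semisimple with constant multiplicity $\dim\ker S$. The paper gets the same fact from its block form, where the nonzero eigenvalues are those of the invertible matrix $S_0R_{11}(z)$.

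On the other hand, the Puiseux analysis you anticipate at the end is unnecessary; your Step~3 already contains the argument. The bound $|m_{S,T}(z)|\le d\,\|S\|\,\|(zI-T)^{-1}\|$ holds near each real $v\notin\spec T$. By Stieltjes inversion, it makes $\nu_{S,T}$ dominated by a multiple of Lebesgue measure on a neighbourhood of $v$. So $\nu_{S,T}$ is absolutely continuous on $\mathbb R\setminus\spec T$, and its singular part lives on the finite set $\spec T$, where Step~4 computes it. This is a shorter way to rule out a singular continuous part than the paper's appeal to the algebraicity of $m_{S,T}$. The \emph{exactly half} claim follows from continuity of eigenvalues together with the conjugation symmetry you noted. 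No further control of eigenvalues approaching $0$ or the real axis is needed.
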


\begin{proof}
If $S=0$, the result is immediate. Suppose
$S\ne0$, and write
\[
R(z)=(zI-T)^{-1},\qquad \Im z>0.
\]
We first show that the sum defining $m_{S,T}$
is holomorphic.

With respect to the orthogonal decomposition
$\operatorname{ran}S\oplus\ker S$, write
\[
S=\begin{pmatrix}S_0&0\\0&0\end{pmatrix},
\qquad
R(z)=\begin{pmatrix}
R_{11}(z)&R_{12}(z)\\
R_{21}(z)&R_{22}(z)
\end{pmatrix},
\]
where $S_0$ is invertible. Then
\[
SR(z)=
\begin{pmatrix}
S_0R_{11}(z)&S_0R_{12}(z)\\
0&0
\end{pmatrix}.
\]
For every nonzero $u\in\operatorname{ran}S$,
the resolvent identity gives
\[
\Im\langle u,R_{11}(z)u\rangle
=-(\Im z)\|R(z)u\|^2<0.
\]
In particular, $R_{11}(z)$ is invertible.

Suppose that $S_0R_{11}(z)$ has a real
eigenvalue $\kappa$ with eigenvector $u$.
Then
\[
\langle u,R_{11}(z)u\rangle
=\kappa\langle u,S_0^{-1}u\rangle
\in\mathbb R,
\]
a contradiction. Thus, the nonzero
eigenvalues of $SR(z)$ never lie on the
real axis. Its remaining eigenvalues
are identically zero.

As $z$ tends to infinity in the upper
half-plane, the nonzero eigenvalues of
$SR(z)$ approach those of $S/z$.
Consequently, exactly
$r=\operatorname{rank}S_+$ eigenvalues
lie in the lower half-plane, counted
with algebraic multiplicity.
Since these eigenvalues remain separated
from the others, their sum is holomorphic
by the Riesz projection formula.
In particular,
\[
\Im m_{S,T}(z)\le0,\qquad \Im z>0.
\]

We next determine the first two terms
of its expansion at infinity. Put
\[
M(w)=S(I-wT)^{-1}
=S+wST+O(w^2),
\]
and let $P(w)$ be the Riesz projection
associated with the eigenvalues of $M(w)$
that converge to the positive eigenvalues
of $S$. Writing $P_+=\mathbf1_{(0,\infty)}(S),$ 
we have $P(0)=P_+$ and
\[
m_{S,T}(z)
=z^{-1}\Tr\bigl(M(z^{-1})P(z^{-1})\bigr).
\]
Differentiating $P(w)^2=P(w)$ at $w=0$
shows that $P'(0)$ is off-diagonal
with respect to
$P_+\oplus(I-P_+)$. Since $S$ is
block diagonal for this decomposition, $\Tr(SP'(0))=0.$ 
It follows that
\[
\begin{aligned}
\Tr(M(w)P(w))
&=\Tr(SP_+)
+w\Tr(STP_+)+O(w^2)\\
&=\Tr S_+
+w\Tr(S_+T)+O(w^2).
\end{aligned}
\]
Therefore,
\begin{equation}\label{eq:11}
m_{S,T}(z)
=\frac{\Tr S_+}{z}
+\frac{\Tr(S_+T)}{z^2}
+O(z^{-3}).
\end{equation}
This argument also covers the case
$r=0$, when $m_{S,T}=0$.

Since $-m_{S,T}$ is a Herglotz function,
the Herglotz representation theorem
\cite{GesztesyTsekanovskii}, together
with \eqref{eq:11}, gives a finite
positive measure $\nu_{S,T}$ satisfying
\eqref{eq:6}. Its total mass is
$\Tr S_+$.

For real $z$ outside the spectral
interval of $T$, the matrix
$S(zI-T)^{-1}$ has real spectrum,
since $zI-T$ is definite.
The sum $m_{S,T}$ extends analytically
across these intervals and is real
there. Hence Stieltjes inversion gives
\[
\operatorname{supp}\nu_{S,T}
\subseteq[\lambda_{\min}(T),\lambda_{\max}(T)].
\]
Comparison of the coefficient of
$z^{-2}$ in \eqref{eq:11} with the
expansion of \eqref{eq:6} gives
\eqref{eq:8}. This proves
\eqref{eq:7} and \eqref{eq:8}.

We now determine the atoms of
$\nu_{S,T}$. Fix $\lambda\in\spec T$
and let $z=\lambda+i\varepsilon$.
Since
\[
\varepsilon(zI-T)^{-1}
\longrightarrow-iP_\lambda,
\qquad \varepsilon\downarrow0,
\]
we obtain
\[
\varepsilon S(zI-T)^{-1}
\longrightarrow-iSP_\lambda.
\]
The nonzero eigenvalues of
$SP_\lambda$ coincide with those
of the Hermitian matrix
$P_\lambda SP_\lambda$.
By continuity of eigenvalues,
counted with algebraic multiplicity,
the positive eigenvalues of
$P_\lambda SP_\lambda$ give the
nonzero limiting eigenvalues in
the lower half-plane after
multiplication by $-i$.
The negative eigenvalues give limits
in the upper half-plane, while the
remaining scaled eigenvalues tend
to zero. Consequently,
\[
-\lim_{\varepsilon\downarrow0}
\varepsilon\Im m_{S,T}
(\lambda+i\varepsilon)
=\Tr(P_\lambda SP_\lambda)_+.
\]
On the other hand, \eqref{eq:6} gives
\[
\begin{aligned}
&-\varepsilon\Im m_{S,T}
(\lambda+i\varepsilon) =
\int_{\mathbb R}
\frac{\varepsilon^2}
{(\lambda-v)^2+\varepsilon^2}
\,d\nu_{S,T}(v)
\longrightarrow
\nu_{S,T}(\{\lambda\}).
\end{aligned}
\]
This proves \eqref{eq:9}.
Since $S(zI-T)^{-1}$ is locally
bounded near every real
$v\notin\spec T$, there are no
other atoms.

For $v\notin\spec T$, the
characteristic polynomial of
$S(vI-T)^{-1}$ has real
coefficients, since it is
proportional to
\[
\det(S-\kappa(vI-T)).
\]
Thus, its nonreal eigenvalues occur
in conjugate pairs. Taking boundary
values from the upper half-plane,
we obtain, for almost every such $v$,
\[
-\Im m_{S,T}(v+i0)
=\frac12\sum_{j=1}^d
\left|
\Im\lambda_j\bigl(S(vI-T)^{-1}\bigr)
\right|.
\]
The Stieltjes inversion formula
therefore gives \eqref{eq:10}.

It remains to show that
$\nu_{S,T}$ has no singular
continuous part. The function
$m_{S,T}$ is algebraic. Indeed,
if $\lambda_1(z),\ldots,\lambda_d(z)$
are the eigenvalues of
$S(zI-T)^{-1}$, then $m_{S,T}(z)$
satisfies
\[
\prod_{\substack{
J\subseteq\{1,\ldots,d\}\\
|J|=r}}
\left(
u-\sum_{j\in J}\lambda_j(z)
\right)=0.
\]
The coefficients of this polynomial
in $u$ are symmetric functions of
the eigenvalues and hence rational
functions of $z$. When $r=0$,
the polynomial is simply $u$.

After removing repeated factors,
the algebraic branches have only
finitely many possible poles and
branch points, together with the
zeros and poles of the leading
coefficients. Outside this finite
set, the boundary values of
$m_{S,T}$ are real-analytic.
Stieltjes inversion then shows
that $\nu_{S,T}$ is absolutely
continuous on each of the
corresponding open intervals.
Any remaining singular part is
therefore supported on a finite
set. Such a measure is purely
atomic, and its atoms have already
been determined by \eqref{eq:9}.
Thus, $\nu_{S,T}$ has no singular
continuous part.
\end{proof}

\begin{remark}\label{rem:2.5}
The Herglotz representation used above is classical;
see \cite{GesztesyTsekanovskii}. Related realization
results for matrix-valued Herglotz--Nevanlinna functions
were obtained by Belyi et al.\
\cite{BelyiHassiDeSnooTsekanovskii}.
We thank Professor Sergey Belyi for bringing this work
to our attention and for helpful discussions of
Lemma~\ref{lem:2.4}.

The density formula \eqref{eq:10} is closely related
to the quotient-pencil formulas for tracial joint
spectral measures in
\cite[Theorems~1.7 and 3.1]{HeinTracial}.
A positive planar measure also appears in
\cite{HeinConv}, where the problem concerns a
fourth-order comparison between classical and free
convolutions. In the present paper,
Lemma~\ref{lem:2.4} identifies both the absolutely
continuous part and the atomic masses of the
Herglotz measure. Removing the atoms corresponding
to spectral pinching gives a positive measure
representing the second-order trace loss.
This is the main step in the proof of
Theorem~\ref{thm:2.1}.
\end{remark}

\begin{proof}[Proof of Theorem~\ref{thm:2.1}]
Let $A,B$ be Hermitian and put $D=E_A(B)$.

\emph{The marginal identity.} Fix $b\in\mathbb R$ and apply
Lemma~\ref{lem:2.4} with $S=B-bI$ and $T=A$.
Since
\[
(B-bI)(A-aI)^{-1}
=-(B-bI)(aI-A)^{-1},
\]
formula \eqref{eq:10} gives
\[
k_{A,B}(a,b)
=\frac{d\nu_{B-bI,A}^{\mathrm{ac}}}{da}
\]
for almost every $a\in\mathbb R$. By \eqref{eq:9}, the total mass of its atomic
part is
\[
\begin{aligned}
&\sum_{\alpha\in\spec A}
\Tr\bigl(P_\alpha(B-bI)P_\alpha\bigr)_+ =\Tr(D-bI)_+.
\end{aligned}
\]
Since $\nu_{B-bI,A}$ has total mass
$\Tr(B-bI)_+$ and no singular continuous
part, we obtain
\begin{equation}\label{eq:12}
\int_{\mathbb R}k_{A,B}(a,b)\,da
=\Tr(B-bI)_+-\Tr(D-bI)_+.
\end{equation}

We next determine the support of $\zeta_{A,B}$.
If $a$ lies outside the spectral interval
of $A$, then $A-aI$ is definite.
Similarly, if $b$ lies outside the spectral
interval of $B$, then $B-bI$ is definite.
In either case, the matrix $(B-bI)(A-aI)^{-1}$ 
is similar to a Hermitian matrix.
Thus, all its eigenvalues are real, and
$k_{A,B}(a,b)=0$.

It follows that $\zeta_{A,B}$ is supported in
\[
[\lambda_{\min}(A),\lambda_{\max}(A)]
\times
[\lambda_{\min}(B),\lambda_{\max}(B)].
\]
Moreover, $k_{A,B}$ is measurable, since
the sum in \eqref{eq:1} depends continuously
on $(a,b)$ away from the finitely many lines
$a\in\spec A$.

To compute the total mass, let
$L=\lambda_{\min}(B)$ and
$U=\lambda_{\max}(B)$.
Since $E_A$ is unital and positive,
the spectrum of $D$ also lies in $[L,U]$.
Integrating \eqref{eq:12} with respect
to $b$ and using Tonelli's theorem gives
\[
\begin{aligned}
\zeta_{A,B}(\mathbb R^2)
&=\int_L^U
\bigl[\Tr(B-bI)_+-\Tr(D-bI)_+\bigr]\,db\\
&=\frac12\bigl[
\Tr(B-LI)^2-\Tr(D-LI)^2
\bigr]\\
&=\frac12\bigl(\Tr B^2-\Tr D^2\bigr)\\
&=\frac12\Tr(B-D)^2\\
&=\frac12\|B-D\|_2^2.
\end{aligned}
\]
Here we used $\Tr B=\Tr D$ and the fact
that $E_A$ is the orthogonal projection
onto the commutant of $A$ with respect
to the Hilbert--Schmidt inner product.

Thus, $\zeta_{A,B}$ is finite and positive,
and \eqref{eq:3} holds. In particular,
\[
\zeta_{A,B}=0
\quad\Longleftrightarrow\quad
B=E_A(B)
\quad\Longleftrightarrow\quad
AB=BA.
\]

\emph{The representation formula.}
We first consider the case $x=0$ and $y=1$.
For every $v\in[L,U]$, Taylor's formula gives
\[
f(v)=f(L)+f'(L)(v-L)
+\int_L^U(v-b)_+f''(b)\,db.
\]
Applying functional calculus to $B$ and $D$,
taking traces, and using $\Tr B=\Tr D$, from \eqref{eq:12}
we obtain 
\begin{equation}\label{eq:13}
\Tr f(B)-\Tr f(D)
=\int_{\mathbb R^2}f''(b)\,
d\zeta_{A,B}(a,b).
\end{equation}

We now extend this identity to all
$x,y\in\mathbb R$.
For every $t\in\mathbb R$, we have
\[
\begin{aligned}
&(B+tA-(b+ta)I)(A-aI)^{-1} =(B-bI)(A-aI)^{-1}+tI.
\end{aligned}
\]
Adding a real scalar multiple of the
identity does not change the imaginary
parts of the eigenvalues. Consequently,
\begin{equation}\label{eq:14}
\begin{aligned}
k_{A,B+tA}(a,b+ta)&=k_{A,B}(a,b),\\
E_A(B+tA)&=D+tA.
\end{aligned}
\end{equation}
Applying \eqref{eq:13} to $B+tA$
and using \eqref{eq:14}, we obtain
\[
\Tr f(B+tA)-\Tr f(D+tA)
=\int_{\mathbb R^2}f''(b+ta)\,
d\zeta_{A,B}(a,b).
\]

Suppose $y\ne0$. Set $t=x/y$ and
replace $f(v)$ by $g(v)=f(yv)$.
Since $g''(v)=y^2f''(yv),$ 
we obtain
\[
\begin{aligned}
&\Tr f(xA+yB)-\Tr f(xA+yD) =y^2\int_{\mathbb R^2}
f''(xa+yb)\,d\zeta_{A,B}(a,b),
\end{aligned}
\]
which is \eqref{eq:2}.
The case $y=0$ is immediate.

It remains to justify the stated
domain of $f$. By \eqref{eq:14},
the transformation
\[
(a,b)\longmapsto(a,b+ta)
\]
maps $\zeta_{A,B}$ to
$\zeta_{A,B+tA}$.
Using the support bound proved above,
we obtain, for all $x,y\in\mathbb R$,
\[
\lambda_{\min}(xA+yB)
\le xa+yb
\le\lambda_{\max}(xA+yB)
\]
on $\operatorname{supp}\zeta_{A,B}$.
For $y=0$, the same inequalities follow
directly from the first-coordinate
support bound.

Furthermore,
\[
xA+yD=E_A(xA+yB).
\]
Since $E_A$ is unital and positive,
the spectrum of $xA+yD$ is contained in
\[
[\lambda_{\min}(xA+yB),
 \lambda_{\max}(xA+yB)].
\]
Thus, \eqref{eq:2} holds for every
$C^2$ function defined on an interval
containing the spectrum of $xA+yB$.

In fact, the preceding inequalities
also show that
\[
\operatorname{supp}\zeta_{A,B}
\subseteq W(A,B),
\]
where
\[
W(A,B)=
\left\{
(\langle A\xi,\xi\rangle,
 \langle B\xi,\xi\rangle):
\xi\in\mathbb C^d,\ \|\xi\|=1
\right\}
\]
is the joint numerical range of $A$ and $B$.
Indeed, by the Toeplitz--Hausdorff theorem applied
to $A+iB$, the set $W(A,B)$ is convex.
Its support function is
\[
h_{W(A,B)}(x,y)=\lambda_{\max}(xA+yB),
\]
so the inequalities above imply
$\operatorname{supp}\zeta_{A,B}\subseteq W(A,B)$.

This completes the proof.
\end{proof}

We next show that the measure $\zeta_{A,B}$ is uniquely
determined by the representation formula \eqref{eq:2}.
We also establish its behavior under unitary conjugation
and affine transformations.

\begin{proposition}\label{prop:2.6}
The measure $\zeta_{A,B}$ is the unique compactly
supported finite signed measure on $\mathbb R^2$
satisfying \eqref{eq:2} for all polynomials $f$
and all $x,y\in\mathbb R$.
\end{proposition}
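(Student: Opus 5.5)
Existence is already supplied by Theorem~\ref{thm:2.1}, so only uniqueness needs proof. Suppose $\mu$ is a compactly supported finite signed measure on $\mathbb R^2$ satisfying \eqref{eq:2} for all polynomials $f$ and all $x,y$. Put $\eta=\zeta_{A,B}-\mu$. Subtracting the two representations gives
\[
y^2\int_{\mathbb R^2} f''(xa+yb)\,d\eta(a,b)=0
\]
for every polynomial $f$ and all real $x,y$. I will show that this forces $\eta=0$ by showing that all polynomial moments of $\eta$ vanish. A compactly supported finite signed measure with vanishing moments is zero, since polynomials are dense in $C(K)$ for a compact set $K$ containing the support (Stone--Weierstrass), and a signed Radon measure is determined by its action on $C(K)$.

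Take $f(v)=v^{k+2}/((k+2)(k+1))$ for $k\ge0$, so that $f''(v)=v^k$. For $y\neq0$ we may divide by $y^2$ and obtain
\[
p_k(x,y):=\int_{\mathbb R^2}(xa+yb)^k\,d\eta(a,b)=0 .
\]
The function $p_k$ is a homogeneous polynomial of degree $k$ in $(x,y)$. Because $\eta$ is finite with compact support, its coefficients are the finite numbers $\binom{k}{j}\int a^jb^{k-j}\,d\eta$. A polynomial vanishing on the dense set $\{y\neq0\}$ vanishes identically. Reading off its coefficients gives $\int a^jb^{k-j}\,d\eta=0$ for all $0\le j\le k$ and all $k$. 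Hence every monomial moment of $\eta$ vanishes, and the density argument above gives $\eta=0$, that is, $\mu=\zeta_{A,B}$.

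I expect no serious obstacle. The only care points are two. First, division by $y^2$ is allowed only for $y\ne0$, and this is handled by continuity of the polynomial $p_k$. Second, the conclusion from vanishing moments needs compact support, which is why the statement restricts to compactly supported measures. Without compact support, moment indeterminacy could occur.
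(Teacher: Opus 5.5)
Your proposal is correct and follows essentially the same route as the paper. Both test \eqref{eq:2} with $f(v)=v^{k+2}$, compare coefficients of the resulting homogeneous polynomials in $(x,y)$ to obtain equality of all mixed moments, and conclude by Stone--Weierstrass density on a compact set containing both supports. Passing to the difference $\zeta_{A,B}-\mu$ and spelling out the extension from $y\neq0$ by continuity are only cosmetic refinements of the same argument.
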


\begin{proof}
Suppose that $\mu$ is another compactly supported
finite signed measure on $\mathbb R^2$ satisfying
\eqref{eq:2} for all polynomials $f$ and all
$x,y\in\mathbb R$.
For $N\ge0$, take $f(t)=t^{N+2}$ in \eqref{eq:2}.
Since $f''(t)=(N+1)(N+2)t^N$, we obtain
\[
\int_{\mathbb R^2}(xa+yb)^N\,d\mu(a,b)
=
\int_{\mathbb R^2}(xa+yb)^N\,d\zeta_{A,B}(a,b)
\]
for every $x\in\mathbb R$ and $y\ne0$.
Both sides are homogeneous polynomials in $x,y$.
Comparing their coefficients gives
\[
\int_{\mathbb R^2}a^pb^q\,d\mu(a,b)
=
\int_{\mathbb R^2}a^pb^q\,d\zeta_{A,B}(a,b)
\]
for all nonnegative integers $p,q$.

Thus, $\mu$ and $\zeta_{A,B}$ have the same
mixed moments. Since both measures have compact
support, there is a compact set $K\subset\mathbb R^2$
containing their supports. By the Stone--Weierstrass
theorem, polynomials are dense in $C(K)$.
Hence
\[
\int_K g\,d\mu
=
\int_K g\,d\zeta_{A,B}
\]
for every $g\in C(K)$.
It follows that $\mu=\zeta_{A,B}$.
\end{proof}

The representation formula and uniqueness also
give the following covariance properties.
For every unitary matrix $U$,
\[
\zeta_{UAU^*,\,UBU^*}=\zeta_{A,B}.
\]
For $\alpha\ne0$ and
$\beta,s,t\in\mathbb R$, we have
\[
\zeta_{\alpha A+sI,\,\beta B+tI}
=\beta^2(T_{\alpha,\beta,s,t})_\#
\zeta_{A,B},
\]
where
\[
T_{\alpha,\beta,s,t}(a,b)
=(\alpha a+s,\beta b+t).
\]
Here $T_\#$ denotes the pushforward of a
measure under the map $T$.
Similarly, the identity \eqref{eq:14} gives
\[
\zeta_{A,B+rA}
=(T_r)_\#\zeta_{A,B},
\qquad
T_r(a,b)=(a,b+ra).
\]
These identities can also be verified directly
from the density \eqref{eq:1}.

We now relate Theorem~\ref{thm:2.1} to
Koplienko spectral shift theory
\cite{GPS,Peller,DykemaSkripka}.

For $x,y\in\mathbb R$, set $H_{x,y}=xA+yD, 
V_y=y(B-D),$ 
and let $L_{x,y}:\mathbb R^2\to\mathbb R$ be
the linear map
\[
L_{x,y}(a,b)=xa+yb.
\]
Define the positive measure
\begin{equation}\label{eq:16}
\kappa_{x,y}
=y^2(L_{x,y})_\#\zeta_{A,B},
\end{equation}
where $(L_{x,y})_\#\zeta_{A,B}$ denotes the
pushforward of $\zeta_{A,B}$ under $L_{x,y}$.

\begin{corollary}\label{cor:2.7}
For every $x,y\in\mathbb R$ and every $C^2$
function $f$ defined on an interval containing
$\spec(xA+yB)$, we have
\begin{equation}\label{eq:15}
\begin{aligned}
&\Tr\bigl[
f(H_{x,y}+V_y)-f(H_{x,y})
-f'(H_{x,y})V_y
\bigr] =\int_{\mathbb R}f''(s)\,
d\kappa_{x,y}(s).
\end{aligned}
\end{equation}
Moreover, $\kappa_{x,y}(\mathbb R)
=\frac12\|V_y\|_2^2.$ 
\end{corollary}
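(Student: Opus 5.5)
The identity \eqref{eq:15} will be obtained by rewriting the left-hand side of \eqref{eq:2} and then changing variables in the integral. Since $H_{x,y}+V_y=xA+yD+y(B-D)=xA+yB$, the first two terms on the left of \eqref{eq:15} are exactly $\Tr f(xA+yB)-\Tr f(xA+yD)$. It therefore suffices to show that the first-order term vanishes:
\[
\Tr\bigl(f'(H_{x,y})V_y\bigr)=0 .
\]

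\emph{Vanishing of the first-order term.} The matrix $H_{x,y}=xA+yD$ lies in the commutant $\{A\}'$, because $D=E_A(B)$ commutes with $A$. Since $\{A\}'$ is a $*$-algebra, $f'(H_{x,y})$ also lies in $\{A\}'$. Now $E_A$ is the Hilbert--Schmidt orthogonal projection onto $\{A\}'$, and $B-D=B-E_A(B)$ lies in the kernel of that projection. Hence $\Tr(f'(H_{x,y})(B-D))=0$. Alternatively, one can use the bimodule property and $\Tr\circ E_A=\Tr$, exactly as in the proof of Theorem~\ref{thm:oned-loss}. Multiplying by $y$ gives $\Tr(f'(H_{x,y})V_y)=0$.

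\emph{Change of variables.} By Theorem~\ref{thm:2.1}, the left-hand side of \eqref{eq:15} equals $y^2\int f''(xa+yb)\,d\zeta_{A,B}(a,b)$. By the definition \eqref{eq:16} of the pushforward, this is $\int_{\mathbb R} f''(s)\,d\kappa_{x,y}(s)$. The pushforward is legitimate because $\zeta_{A,B}$ is finite and compactly supported, and $f''$ is continuous on a neighbourhood of $L_{x,y}(\operatorname{supp}\zeta_{A,B})$. That neighbourhood condition holds because the proof of Theorem~\ref{thm:2.1} showed $L_{x,y}(\operatorname{supp}\zeta_{A,B})\subseteq[\lambda_{\min}(xA+yB),\lambda_{\max}(xA+yB)]$. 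For the same reason the spectrum of $H_{x,y}=E_A(xA+yB)$ lies in that interval, so $f'(H_{x,y})$ is defined.

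\emph{Total mass.} We have $\kappa_{x,y}(\mathbb R)=y^2\zeta_{A,B}(\mathbb R^2)=\tfrac12 y^2\|B-D\|_2^2=\tfrac12\|V_y\|_2^2$ by \eqref{eq:3}. The only step needing any care is the vanishing of the first-order term, and it follows directly from the conditional-expectation structure.
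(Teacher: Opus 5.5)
Your proposal is correct and follows the paper's proof. You rewrite $H_{x,y}+V_y=xA+yB$, kill the first-order term using the bimodule and trace-preserving properties of $E_A$ (equivalently, Hilbert--Schmidt orthogonality), then apply \eqref{eq:2}, push forward under $L_{x,y}$, and read off the mass from \eqref{eq:3}. One small correction: $f''$ is only guaranteed to be continuous on the given interval, not on a neighbourhood of it, but since that interval contains $L_{x,y}(\operatorname{supp}\zeta_{A,B})$, this suffices.
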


\begin{proof}
Since $H_{x,y}$ commutes with $A$, so does
$f'(H_{x,y})$. Moreover, $E_A(V_y)=0$.
The trace-preserving and bimodule properties
of $E_A$ therefore give
\[
\begin{aligned}
\Tr\bigl(f'(H_{x,y})V_y\bigr)
&=\Tr\bigl(E_A(f'(H_{x,y})V_y)\bigr)\\
&=\Tr\bigl(f'(H_{x,y})E_A(V_y)\bigr)\\
&=0.
\end{aligned}
\]
Since $H_{x,y}+V_y=xA+yB,$ 
the representation formula \eqref{eq:2}
gives \eqref{eq:15} and \eqref{eq:16}.

Finally, by \eqref{eq:3},
\[
\begin{aligned}
\kappa_{x,y}(\mathbb R)
&=y^2\zeta_{A,B}(\mathbb R^2) =\frac{y^2}{2}\|B-D\|_2^2 =\frac12\|V_y\|_2^2.
\end{aligned}
\]
\end{proof}

\section{The corrected BMV conjecture and positive moments}
\label{sec:applications}

For integers $n,m\ge0$, let $S_{n,m}(A,B)$ be the average of all
words containing $n$ copies of $A$ and $m$ copies of $B$, and set
$\mathcal A_{n,m}(A,B)=\Tr S_{n,m}(A,B)$.
For a real polynomial $p(s,t)=\sum_{n,m}c_{nm}s^nt^m$, put
\[
\widetilde p(A,B)=\sum_{n,m}c_{nm}S_{n,m}(A,B).
\]
As before, $D=E_A(B)$.

\begin{proposition}\label{prop:5.1}
For all integers $p,q\ge0$,
\begin{equation}\label{eq:31}
\int_{\mathbb R^2}a^pb^q\,d\zeta_{A,B}(a,b)
=\frac{\mathcal A_{p,q+2}(A,B)-\mathcal A_{p,q+2}(A,D)}
{(q+1)(q+2)}.
\end{equation}
Consequently, for every real bivariate polynomial $p$,
\begin{equation}\label{eq:32}
\Tr\widetilde p(A,B)-\Tr\widetilde p(A,D)
=\int_{\mathbb R^2}\partial_b^2p(a,b)\,d\zeta_{A,B}(a,b).
\end{equation}
\end{proposition}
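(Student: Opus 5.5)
Apply Theorem~\ref{thm:2.1} with $f(t)=t^{N+2}$, where $N=p+q$, and compare the coefficients of $x^py^{q+2}$ on both sides. These are polynomial identities in $(x,y)$.

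For the left-hand side, expand $\Tr(xA+yB)^{N+2}$ as a sum over all words of length $N+2$ in $A$ and $B$. The coefficient of $x^py^{q+2}$ is the sum of the traces of all words containing $p$ copies of $A$ and $q+2$ copies of $B$. There are $\binom{N+2}{p}$ such words, so this coefficient equals $\binom{N+2}{p}\mathcal A_{p,q+2}(A,B)$. The same expansion with $D$ in place of $B$ gives $\binom{N+2}{p}\mathcal A_{p,q+2}(A,D)$.

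For the right-hand side, we have $f''(t)=(N+2)(N+1)t^N$. Hence the right-hand side equals
\[
y^2(N+2)(N+1)\sum_{k}\binom{N}{k}x^ky^{N-k}\int a^kb^{N-k}\,d\zeta_{A,B}.
\]
The coefficient of $x^py^{q+2}$ comes from the term $k=p$, and equals $(N+2)(N+1)\binom{N}{p}\int a^pb^q\,d\zeta_{A,B}$.

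Identity \eqref{eq:2} holds for all real $x,y$, and both sides are polynomials, so the coefficients agree. This gives
\[
\binom{N+2}{p}\bigl(\mathcal A_{p,q+2}(A,B)-\mathcal A_{p,q+2}(A,D)\bigr)=(N+2)(N+1)\binom{N}{p}\int a^pb^q\,d\zeta_{A,B}.
\]
It remains to simplify the ratio of combinatorial factors:
\[
\frac{(N+2)(N+1)\binom{N}{p}}{\binom{N+2}{p}}=\frac{(N+2)!\,/\,(p!\,q!)}{(N+2)!\,/\,\bigl(p!\,(q+2)!\bigr)}=(q+1)(q+2).
\]
This yields \eqref{eq:31}.

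Next, \eqref{eq:32} follows by linearity from \eqref{eq:31}. For a monomial $s^nt^m$ with $m\ge2$, we have $\partial_b^2(a^nb^m)=m(m-1)a^nb^{m-2}$. By \eqref{eq:31} with $p=n$ and $q=m-2$, the integral of this against $\zeta_{A,B}$ equals $\mathcal A_{n,m}(A,B)-\mathcal A_{n,m}(A,D)$. For $m=0,1$, the second derivative vanishes, so we must check that $\mathcal A_{n,m}(A,B)=\mathcal A_{n,m}(A,D)$. For $m=0$ both sides equal $\Tr A^n$. For $m=1$, cyclicity gives $\mathcal A_{n,1}(A,B)=\Tr(A^nB)$. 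Since $A^n$ commutes with $A$, the conditional-expectation identity $\Tr(A^nB)=\Tr(E_A(A^nB))=\Tr(A^nD)$ holds. Summing over the monomials with coefficients $c_{nm}$ gives \eqref{eq:32}.

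The only point needing care is the combinatorial identification of the coefficient with the word average. It relies on the fact that every word with $p$ copies of $A$ and $q+2$ copies of $B$ appears exactly once in the expansion of $(xA+yB)^{N+2}$. This holds because the expansion of a noncommutative power is exactly the sum over all words. Polynomial $f$ is admissible in \eqref{eq:2}, and the case $y=0$ is irrelevant because we compare coefficients of polynomial identities valid for all real $x,y$. So there is no analytic obstacle.
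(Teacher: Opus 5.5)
Your proposal is correct and follows the paper's proof: apply Theorem~\ref{thm:2.1} to $f(v)=v^{p+q+2}$, match the coefficients of $x^py^{q+2}$, simplify the same binomial ratio to $(q+1)(q+2)$, and obtain \eqref{eq:32} by linearity. Your check that the monomials with $m=0,1$ give equal traces on both sides (via cyclicity and $\Tr(A^nB)=\Tr(A^nD)$) fills in a detail that the paper leaves implicit in the phrase ``by linearity.''
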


\begin{proof}
Apply Theorem~\ref{thm:2.1} to $f(v)=v^{p+q+2}$ and compare
the coefficients of $x^py^{q+2}$. Since
\[
\frac{\binom{p+q+2}{q+2}}
{(p+q+2)(p+q+1)\binom{p+q}{q}}
=\frac{1}{(q+1)(q+2)},
\]
we obtain \eqref{eq:31}. Equation~\eqref{eq:32} follows by linearity.
\end{proof}

We now prove the corrected BMV conjecture proposed in
\cite[Conjecture~5.1]{Dinh}, which was previously
established for $m=2$. For $A,B\ge0$,
Theorem~\ref{thm:2.1} gives
$\operatorname{supp}\zeta_{A,B}\subset[0,\infty)^2$.
Together with \eqref{eq:31}, this yields the
conjecture for all word averages.

\begin{theorem}\label{thm:bmv-pinching}
Let $A,B$ be positive semidefinite matrices.
Then, for all integers $n,m\ge0$,
\[
\mathcal A_{n,m}(A,B)\ge
\mathcal A_{n,m}\bigl(A,E_A(B)\bigr).
\]
Equality holds for $m=0,1$. For $m\ge2$,
equality holds if and only if $AB=BA$.
\end{theorem}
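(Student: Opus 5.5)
The plan is to read the result off Proposition~\ref{prop:5.1} together with the support statement of Theorem~\ref{thm:2.1}.

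\emph{The cases $m=0,1$.} For $m=0$ both sides equal $\Tr A^n$. For $m=1$, every word with one letter $B$ is, by cyclicity, equal in trace to $\Tr(A^nB)$. Since $A^n$ commutes with $A$, the bimodule and trace-preserving properties of $E_A$ give
\[
\Tr(A^nB)=\Tr\bigl(E_A(A^nB)\bigr)=\Tr(A^nD).
\]
So equality holds in both cases.

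\emph{The inequality for $m\ge2$.} Write $m=q+2$ with $q\ge0$. By \eqref{eq:31},
\[
\mathcal A_{n,m}(A,B)-\mathcal A_{n,m}(A,D)
=m(m-1)\int_{\mathbb R^2}a^nb^{m-2}\,d\zeta_{A,B}(a,b).
\]
Since $A,B\ge0$, Theorem~\ref{thm:2.1} places $\operatorname{supp}\zeta_{A,B}$ inside $[0,\lambda_{\max}(A)]\times[0,\lambda_{\max}(B)]$. The integrand $a^nb^{m-2}$ is nonnegative there, with the convention $0^0=1$. Because $\zeta_{A,B}$ is a positive measure, the difference is $\ge0$. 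If $AB=BA$, then $\zeta_{A,B}=0$ by Theorem~\ref{thm:2.1}, so equality holds.

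\emph{Equality forces commutation.} This is the main obstacle, because the integrand vanishes on the coordinate axes $a=0$ or $b=0$. A priori $\zeta_{A,B}$ could concentrate there. I would handle this with the explicit density \eqref{eq:1}.

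Suppose the difference is zero. Then $a^nb^{m-2}k_{A,B}(a,b)=0$ for Lebesgue-almost every $(a,b)$. Since $a^nb^{m-2}>0$ on the open quadrant, we get $k_{A,B}=0$ a.e. on $(0,\infty)^2$.

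The support of $\zeta_{A,B}$ lies in $[0,\infty)^2$. The axes $\{a=0\}$ and $\{b=0\}$ are Lebesgue-null, and $\zeta_{A,B}$ is absolutely continuous with density $k_{A,B}$. Hence
\[
\zeta_{A,B}(\{a=0\}\cup\{b=0\})=0.
\]
Therefore $\zeta_{A,B}=0$, and Theorem~\ref{thm:2.1} gives $AB=BA$.

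The one point to verify is that nothing on the boundary is lost. This holds because $\zeta_{A,B}$ was defined as $k_{A,B}\,da\,db$, with no singular part: the pinching atoms were removed in the construction. So the seemingly delicate boundary issue disappears once the definition is used.
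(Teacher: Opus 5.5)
Your proposal is correct and follows essentially the same route as the paper: the inequality for $m\ge2$ comes from \eqref{eq:31} together with the support bound of Theorem~\ref{thm:2.1}, and the equality case uses absolute continuity of $\zeta_{A,B}$ (so the coordinate axes carry no mass) together with the mass identity \eqref{eq:3}. The paper argues the equality case in contrapositive form ($AB\ne BA$ forces a strictly positive integral), but this is the same argument.
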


\begin{proof}
For $m\ge2$, take $p=n$ and $q=m-2$ in \eqref{eq:31}. This gives
\begin{equation}\label{eq:33}
\mathcal A_{n,m}(A,B)-\mathcal A_{n,m}(A,D)
=m(m-1)\int_{\mathbb R^2}a^nb^{m-2}\,d\zeta_{A,B}(a,b)\ge0.
\end{equation}
For $m=0,1$, equality follows from $E_A(A^n)=A^n$
and the trace-preserving property of $E_A$.
If $AB\ne BA$, then $\zeta_{A,B}(\mathbb R^2)>0$
by \eqref{eq:3}. Since $\zeta_{A,B}$ is absolutely continuous
with respect to planar Lebesgue measure, it gives zero mass
to the coordinate axes. Thus, the integral in \eqref{eq:33}
is strictly positive.
\end{proof}

The polynomial formula \eqref{eq:32} also gives a characterization
of the symmetrized trace inequalities that hold for all
Hermitian matrices.

\begin{theorem}\label{thm:symmetrized-criterion}
For a real bivariate polynomial $p$, the following are equivalent:
\begin{enumerate}
\item[(i)] For all dimensions and all Hermitian matrices $A,B$,
\[
\Tr\widetilde p(A,B)\ge
\Tr\widetilde p\bigl(A,E_A(B)\bigr).
\]
\item[(ii)] $\partial_t^2p(s,t)\ge0$ for all $(s,t)\in\mathbb R^2$.
\end{enumerate}
\end{theorem}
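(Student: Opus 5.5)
The direction (ii)$\Rightarrow$(i) is immediate from Proposition~\ref{prop:5.1}. For Hermitian $A,B$ in any dimension, \eqref{eq:32} gives
\[
\Tr\widetilde p(A,B)-\Tr\widetilde p\bigl(A,E_A(B)\bigr)
=\int_{\mathbb R^2}\partial_b^2p(a,b)\,d\zeta_{A,B}(a,b).
\]
Since $\zeta_{A,B}$ is a positive measure by Theorem~\ref{thm:2.1} and the integrand is pointwise nonnegative by (ii), the right-hand side is nonnegative.

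For (i)$\Rightarrow$(ii), I argue by contraposition. Suppose $\partial_t^2p(s_0,t_0)<0$ at some point. By continuity, $\partial_t^2 p<-c<0$ on a small disc $U$ around $(s_0,t_0)$. The aim is to find Hermitian $A,B$ with $AB\ne BA$ and $\operatorname{supp}\zeta_{A,B}\subseteq U$. Then \eqref{eq:32} together with \eqref{eq:3} gives
\[
\Tr\widetilde p(A,B)-\Tr\widetilde p(A,D)\le -c\,\zeta_{A,B}(\mathbb R^2)=-\tfrac c2\|B-D\|_2^2<0,
\]
which contradicts (i).

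To localize the support, I use the inclusion $\operatorname{supp}\zeta_{A,B}\subseteq W(A,B)$ proved at the end of the proof of Theorem~\ref{thm:2.1}. A simpler route is the rectangle bound $[\lambda_{\min}(A),\lambda_{\max}(A)]\times[\lambda_{\min}(B),\lambda_{\max}(B)]$ stated in the theorem. Take $2\times2$ matrices
\[
A=s_0I+\varepsilon\,\mathrm{diag}(1,-1),\qquad B=t_0I+\varepsilon\begin{pmatrix}0&1\\1&0\end{pmatrix}.
\]
The spectra of $A$ and $B$ lie in $[s_0-\varepsilon,s_0+\varepsilon]$ and $[t_0-\varepsilon,t_0+\varepsilon]$, so the rectangle lies inside $U$ for small $\varepsilon$. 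Moreover $AB\ne BA$, since the off-diagonal part of $B$ is killed by pinching. In fact $D=t_0I$, so $\|B-D\|_2^2=2\varepsilon^2>0$, and hence $\zeta_{A,B}\ne0$ by \eqref{eq:3}.

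I expect no real obstacle. The only point to check is that \eqref{eq:32} is valid for arbitrary Hermitian $A,B$ with no positivity assumption, and that the support statement of Theorem~\ref{thm:2.1} is used in its rectangle form. Both hold as stated earlier. The argument also shows that dimension $2$ already suffices for the equivalence.
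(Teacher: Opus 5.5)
Your proof is correct and follows the paper's argument: the same $2\times2$ construction (your $\varepsilon$ plays the role of the paper's $h$ and $t$), combined with \eqref{eq:32}, the rectangle support bound, and the mass identity \eqref{eq:3}. The only difference is cosmetic: you argue by contraposition with a uniform bound $-c$ on a disc, whereas the paper normalizes $\zeta_{A_h,B_t}$ to a probability measure and lets $h,t\downarrow0$.
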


\begin{proof}
The implication (ii)$\Rightarrow$(i) follows from \eqref{eq:32}
and the positivity of $\zeta_{A,B}$.

Conversely, fix $(a_0,b_0)\in\mathbb R^2$ and choose $h,t>0$.
Set
\[
A_h=\begin{pmatrix}a_0-h&0\\0&a_0+h\end{pmatrix},\qquad
B_t=\begin{pmatrix}b_0&t\\t&b_0\end{pmatrix}.
\]
Then $E_{A_h}(B_t)=b_0I$, and \eqref{eq:3} gives
$\zeta_{A_h,B_t}(\mathbb R^2)=t^2$.
By (i) and \eqref{eq:32},
\[
0\le\frac{1}{t^2}\int_{\mathbb R^2}
\partial_b^2p(a,b)\,d\zeta_{A_h,B_t}(a,b).
\]
The normalized measure is a probability measure supported in
$[a_0-h,a_0+h]\times[b_0-t,b_0+t]$.
Letting $h,t\downarrow0$ gives
$\partial_b^2p(a_0,b_0)\ge0$.
Since $(a_0,b_0)$ was arbitrary, (ii) follows.
\end{proof}

We next consider inequalities among the corrected
BMV coefficient differences. For $A,B\ge0$,
set $D=E_A(B)$ and define
\[
M_{p,q}:=
\frac{\mathcal A_{p,q+2}(A,B)
-\mathcal A_{p,q+2}(A,D)}
{(q+1)(q+2)},\qquad p,q\ge0.
\]

\begin{corollary}\label{cor:5.2}
The sequence $(M_{p,q})_{p,q\ge0}$ is a
compactly supported bivariate Stieltjes
moment sequence, represented by
\[
M_{p,q}=
\int_{\mathbb R_+^2}a^pb^q\,
d\zeta_{A,B}(a,b).
\]
For every finite family of pairs $(p_i,q_i)$,
the matrices
\[
[M_{p_i+p_j,q_i+q_j}]_{i,j},\qquad
[M_{p_i+p_j+1,q_i+q_j}]_{i,j},\qquad
[M_{p_i+p_j,q_i+q_j+1}]_{i,j}
\]
are positive semidefinite. In particular,
\[
M_{p+1,q}^2\le M_{p,q}M_{p+2,q},
\qquad
M_{p,q+1}^2\le M_{p,q}M_{p,q+2}.
\]
\end{corollary}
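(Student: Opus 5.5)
The plan is to read everything off the representation \eqref{eq:31} of Proposition~\ref{prop:5.1} together with the support statement of Theorem~\ref{thm:2.1}. By Proposition~\ref{prop:5.1}, for all $p,q\ge0$,
\[
M_{p,q}=\int_{\mathbb R^2}a^pb^q\,d\zeta_{A,B}(a,b).
\]
For $A,B\ge0$, Theorem~\ref{thm:2.1} places $\operatorname{supp}\zeta_{A,B}$ in $[\lambda_{\min}(A),\lambda_{\max}(A)]\times[\lambda_{\min}(B),\lambda_{\max}(B)]$, which is a compact subset of $\mathbb R_+^2$. So the integral may be taken over $\mathbb R_+^2$. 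Since $\zeta_{A,B}$ is a finite positive measure, $(M_{p,q})$ is a compactly supported bivariate Stieltjes moment sequence.

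For the Hankel-type positivity, fix pairs $(p_i,q_i)$, $i=1,\dots,N$, and a vector $c\in\mathbb C^N$. Set $g(a,b)=\sum_i c_i a^{p_i}b^{q_i}$. Then
\[
\sum_{i,j}\overline{c_i}c_jM_{p_i+p_j,q_i+q_j}
=\int|g|^2\,d\zeta_{A,B}\ge0,
\]
and this gives the first matrix. For the second and third matrices, the same computation produces $\int a\,|g|^2\,d\zeta_{A,B}$ and $\int b\,|g|^2\,d\zeta_{A,B}$. These are nonnegative because $a\ge0$ and $b\ge0$ on the support. The only point needing care is here, where positivity of $A$ and $B$ enters through the support bound. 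The integrand is a real weight times $|g|^2$, so Hermitian symmetry of the matrices is automatic. The matrices are real symmetric, so real $c$ would already suffice.

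Finally, the two log-convexity inequalities come from $2\times2$ principal minors. Take the family $\{(p,q),(p+1,q)\}$ in the first matrix after a shift. Equivalently, apply Cauchy--Schwarz in $L^2(a^pb^q\,d\zeta_{A,B})$, which is a positive measure on $\mathbb R_+^2$:
\[
\Bigl(\int a\cdot a^pb^q\,d\zeta\Bigr)^2\le\int a^pb^q\,d\zeta\int a^2a^pb^q\,d\zeta.
\]
This gives $M_{p+1,q}^2\le M_{p,q}M_{p+2,q}$, and the same argument in the variable $b$ gives the second inequality. Using the weighted measure $a^pb^q\,d\zeta_{A,B}$ avoids parity issues with odd $p,q$, since that weight is nonnegative on the support. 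No step is a genuine obstacle, because everything reduces to positivity and the support bound of $\zeta_{A,B}$.
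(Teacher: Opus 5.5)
Your proposal is correct and follows essentially the same route as the paper. You use the moment representation \eqref{eq:31} and the support bound from Theorem~\ref{thm:2.1} for $A,B\ge0$, then the quadratic forms $\int a^rb^s|P|^2\,d\zeta_{A,B}$ for the three matrices, and Cauchy--Schwarz in $L^2(a^pb^q\,d\zeta_{A,B})$ for the two log-convexity inequalities.
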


\begin{proof}
The moment representation follows from \eqref{eq:31}.
For any finite family $(c_i)$, set
\[
P(a,b)=\sum_i c_i a^{p_i}b^{q_i}.
\]
Then, for $(r,s)=(0,0),(1,0),(0,1)$,
\[
\begin{aligned}
&\sum_{i,j}\overline{c_i}c_j
M_{p_i+p_j+r,q_i+q_j+s} =\int_{\mathbb R_+^2}
a^rb^s|P(a,b)|^2\,d\zeta_{A,B}(a,b)
\ge0.
\end{aligned}
\]
This proves that all three matrices are positive
semidefinite.

Finally, applying Cauchy--Schwarz to $1,a$ and
$1,b$, respectively, in
$L^2(a^pb^q\,d\zeta_{A,B})$ gives
\[
M_{p+1,q}^2\le M_{p,q}M_{p+2,q},
\qquad
M_{p,q+1}^2\le M_{p,q}M_{p,q+2}.
\]
\end{proof}

For $A,B\ge0$, put $D=E_A(B)$. For $x,y>0$, define
\[
R_{A,B}(x,y)
=\frac{\Tr e^{-xA-yB}-\Tr e^{-xA-yD}}{y^2}.
\]
For each integer $N\ge1$, set
\[
T_N(x,y)=
\Tr\left[\left(
e^{-xA/(2N)}e^{-yB/N}e^{-xA/(2N)}
\right)^N\right]
\]
and
\[
R_{A,B}^{(N)}(x,y)
=\frac{T_N(x,y)-\Tr e^{-xA-yD}}{y^2}.
\]

\begin{corollary}\label{cor:5.3}
For every $x,y>0$,
\begin{equation}\label{eq:34}
R_{A,B}(x,y)
=\int_{\mathbb R_+^2}e^{-xa-yb}\,
d\zeta_{A,B}(a,b).
\end{equation}
Consequently, for all nonnegative integers $r,s$,
\[
(-1)^{r+s}\partial_x^r\partial_y^s
R_{A,B}(x,y)\ge0.
\]
All these inequalities are strict if $AB\ne BA$.

Moreover, the Lie--Trotter approximations satisfy
\[
R_{A,B}^{(N)}(x,y)
\downarrow R_{A,B}(x,y)
\qquad (N\to\infty),
\]
and, for every $N\ge1$,
\[
R_{A,B}^{(N)}(x,y)
\ge R_{A,B}(x,y)
\ge\frac12 e^{-x\|A\|-y\|B\|}
\|B-D\|_2^2.
\]
\end{corollary}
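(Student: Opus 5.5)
We would obtain \eqref{eq:34} directly from Theorem~\ref{thm:2.1}, applied with $f(t)=e^{-t}$, so that $f''(t)=e^{-t}$. For $x,y>0$, formula \eqref{eq:2} with $(x,y)$ replaced by $(-x,-y)$ reads
\[
\Tr e^{-xA-yB}-\Tr e^{-xA-yD}=y^2\int_{\mathbb R^2}e^{-xa-yb}\,d\zeta_{A,B}(a,b).
\]
Dividing by $y^2$ gives \eqref{eq:34}. Because $A,B\ge0$, the support statement of Theorem~\ref{thm:2.1} places $\operatorname{supp}\zeta_{A,B}$ inside $[0,\|A\|]\times[0,\|B\|]$, so the integral is over $\mathbb R_+^2$.

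Since $\zeta_{A,B}$ is a finite positive measure with compact support, we may differentiate under the integral sign. This gives
\[
(-1)^{r+s}\partial_x^r\partial_y^sR_{A,B}(x,y)=\int a^rb^se^{-xa-yb}\,d\zeta_{A,B}(a,b)\ge0.
\]
For strictness when $AB\ne BA$, note that $\zeta_{A,B}\neq0$ by \eqref{eq:3}. Moreover, $\zeta_{A,B}$ is absolutely continuous with respect to planar Lebesgue measure, so it gives no mass to the axes. The integrand $a^rb^se^{-xa-yb}$ is strictly positive off the axes, and hence the integral is strictly positive. This is the same argument as in Theorem~\ref{thm:bmv-pinching}.

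For the lower bound on $R_{A,B}$, we use the support bound once more. On $\operatorname{supp}\zeta_{A,B}$ we have $a\le\|A\|$ and $b\le\|B\|$, so $e^{-xa-yb}\ge e^{-x\|A\|-y\|B\|}$ there. Integrating and using $\zeta_{A,B}(\mathbb R^2)=\frac12\|B-D\|_2^2$ from \eqref{eq:3} gives
\[
R_{A,B}(x,y)\ge\tfrac12e^{-x\|A\|-y\|B\|}\|B-D\|_2^2.
\]

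The Lie--Trotter part reduces to a statement about $T_N$ alone. We have $R^{(N)}_{A,B}-R_{A,B}=\bigl(T_N-\Tr e^{-xA-yB}\bigr)/y^2$. It therefore suffices to show that $T_N$ decreases in $N$ and converges to $\Tr e^{-xA-yB}$. Convergence is the Lie--Trotter product formula, in its symmetric form $\bigl(e^{H/2N}e^{K/N}e^{H/2N}\bigr)^N\to e^{H+K}$ with $H=-xA$, $K=-yB$.

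Monotonicity is the main obstacle, and we would handle it with the Araki--Lieb--Thirring inequality in its monotone form. For $X,Y\ge0$, the function
\[
p\longmapsto \Tr\bigl[(X^{p/2}Y^pX^{p/2})^{1/p}\bigr]
\]
is nondecreasing on $(0,\infty)$. We take $X=e^{-xA}$, $Y=e^{-yB}$ and $p=1/N$. Then $T_N$ is exactly this function evaluated at $p=1/N$, because $(X^{1/(2N)}Y^{1/N}X^{1/(2N)})^N$ is its $N$-th power. As $N$ increases, $p$ decreases, so $T_N$ is nonincreasing in $N$.

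It follows that $T_N\downarrow\Tr e^{-xA-yB}$, and in particular $T_N\ge\Tr e^{-xA-yB}$ for every $N\ge1$. This yields $R^{(N)}_{A,B}\downarrow R_{A,B}$ and $R^{(N)}_{A,B}\ge R_{A,B}$. Combined with the lower bound above, this gives the full chain of inequalities.

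The one point to verify carefully is that the monotone form of Araki--Lieb--Thirring holds for all real $p>0$, and not only as a comparison of $p$ with $1$. This is the version proved by Araki, and we would cite it in that form.
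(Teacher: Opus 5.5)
Your proposal is correct and follows the paper's proof essentially step for step: Theorem~\ref{thm:2.1} with $f(v)=e^{-v}$, differentiation under the integral with absolute continuity of $\zeta_{A,B}$ for strictness, Araki--Lieb--Thirring for the monotonicity of $T_N$ (your monotone-in-$p$ form at $p=1/N$ is the paper's comparison with $r=M/N$ after rescaling), Lie--Trotter for the limit, and the support and mass bounds for the final estimate. One cosmetic slip: with $f(t)=e^{-t}$ you should apply \eqref{eq:2} at $(x,y)$ itself rather than at $(-x,-y)$ (that substitution would be right for $f(t)=e^{t}$), although the identity you display is the correct one.
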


\begin{proof}
Applying Theorem~\ref{thm:2.1} with
$f(v)=e^{-v}$ gives \eqref{eq:34}.
Since $\zeta_{A,B}$ is positive and compactly
supported in $\mathbb R_+^2$, differentiation
under the integral yields
\[
\begin{aligned}
&(-1)^{r+s}\partial_x^r\partial_y^s
R_{A,B}(x,y) =
\int_{\mathbb R_+^2}
a^rb^se^{-xa-yb}\,
d\zeta_{A,B}(a,b)\ge0.
\end{aligned}
\]
If $AB\ne BA$, the measure has positive mass
by \eqref{eq:3} and gives zero mass to the
coordinate axes. Hence the integral is
strictly positive.

To see the monotonicity, let $1\le N<M$ and put
$P=e^{-xA/M}$, $Q=e^{-yB/M}$ and $r=M/N\ge1$.
The Araki--Lieb--Thirring inequality \cite{Araki1990} gives
\[
\Tr\bigl[(P^{r/2}Q^rP^{r/2})^N\bigr]
\ge\Tr\bigl[(P^{1/2}QP^{1/2})^M\bigr],
\]
that is, $T_N(x,y)\ge T_M(x,y)$.
The Lie--Trotter product formula gives
\[
\lim_{N\to\infty}T_N(x,y)
=\Tr e^{-xA-yB}.
\]
It follows that
\[
R_{A,B}^{(N)}(x,y)
\downarrow R_{A,B}(x,y).
\]

Finally, since
\[
\operatorname{supp}\zeta_{A,B}
\subseteq[0,\|A\|]\times[0,\|B\|],
\]
we obtain from \eqref{eq:34}
\[
R_{A,B}(x,y)
\ge e^{-x\|A\|-y\|B\|}
\zeta_{A,B}(\mathbb R^2).
\]
The mass identity \eqref{eq:3} completes
the proof.
\end{proof}

\section{Relation with tracial joint spectral measures}
\label{sec:joint-spectral}

To finish this paper, we compare $\zeta_{A,B}$ with the tracial joint spectral measure
of Hein\"avaara~\cite{HeinTracial}. For Hermitian $A,B$, that theory
provides a positive measure $\mu_{A,B}$ satisfying
\[
\Tr H(f)(xA+yB)
=\int_{\mathbb R^2}f(xa+yb)\,d\mu_{A,B}(a,b),
\qquad
H(f)(v)=\int_0^1\frac{1-t}{t}f(tv)\,dt,
\]
for the admissible test functions. Taking $f(v)=v^k$,
$k\ge1$, and comparing coefficients gives
\begin{equation}\label{eq:35}
\int_{\mathbb R^2}a^nb^m\,d\mu_{A,B}(a,b)
=\frac{\mathcal A_{n,m}(A,B)}{(n+m)(n+m+1)},
\qquad n+m\ge1.
\end{equation}

For the rest of this section, assume $A,B\ge0$ and put
$D=E_A(B)$. The measure $\mu_{A,B}$ is supported on
$K=[0,\|A\|]\times[0,\|B\|]$; see
\cite[Lemma~1.6]{HeinTracial} for the coordinate support
and the defining identity above for the upper bounds.
To obtain a finite measure, define the positive functional
\begin{equation}\label{eq:36}
J_{A,B}(F)=\int_{\mathbb R^2}
\left[\int_0^b(b-s)F(a,s)\,ds\right]d\mu_{A,B}(a,b),
\qquad F\in C(K).
\end{equation}
By \eqref{eq:35},
\[
|J_{A,B}(F)|\le\frac{\|F\|_\infty}{2}
\int_{\mathbb R^2}b^2\,d\mu_{A,B}
=\frac{\|F\|_\infty}{12}\Tr B^2.
\]
Thus, $J_{A,B}$ is represented by a finite positive measure,
which we also denote by $J_{A,B}$. Define the finite signed
comparison measure
\[
\eta_{A,B}=J_{A,B}-J_{A,D}.
\]
For $0<t<1$, set $S_t(a,b)=(ta,tb)$.

\begin{theorem}\label{thm:5.9}
The comparison measure $\eta_{A,B}$ is positive and satisfies
\begin{equation}\label{eq:37}
\eta_{A,B}=
\int_0^1t(1-t)(S_t)_\#\zeta_{A,B}\,dt.
\end{equation}
It is absolutely continuous, with density
\begin{equation}\label{eq:38}
q_{A,B}(a,s)=\int_0^1\frac{1-t}{t}
\,k_{A,B}(a/t,s/t)\,dt
\end{equation}
for almost every $(a,s)\in\mathbb R^2$.
Furthermore,
\begin{equation}\label{eq:39}
\eta_{A,B}(\mathbb R^2)
=\frac1{12}\|B-D\|_2^2.
\end{equation}
In particular, $\eta_{A,B}=0$ if and only if $AB=BA$.
\end{theorem}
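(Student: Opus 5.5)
The plan is to identify $\eta_{A,B}$ through its moments and then invoke uniqueness of compactly supported measures, as in Proposition~\ref{prop:2.6}.

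\emph{Moments of $J_{A,B}$.} For $p,q\ge0$ and $F(a,s)=a^ps^q$, the inner integral in \eqref{eq:36} is
\[
\int_0^b(b-s)s^q\,ds=\frac{b^{q+2}}{(q+1)(q+2)}.
\]
Hence, by \eqref{eq:35},
\[
J_{A,B}(a^ps^q)=\frac{\mathcal A_{p,q+2}(A,B)}{(q+1)(q+2)(p+q+2)(p+q+3)},
\]
and the same formula holds with $D$ in place of $B$. Note that $D\ge0$, so $J_{A,D}$ is also supported in $K$. Subtracting and applying \eqref{eq:31} gives
\[
\eta_{A,B}(a^ps^q)=\frac{1}{(p+q+2)(p+q+3)}\int a^pb^q\,d\zeta_{A,B}.
\]

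\emph{Moments of the right-hand side of \eqref{eq:37}.} For the measure $\int_0^1 t(1-t)(S_t)_\#\zeta_{A,B}\,dt$, the moment of $a^pb^q$ is
\[
\int_0^1 t^{p+q+1}(1-t)\,dt\cdot\int a^pb^q\,d\zeta_{A,B},
\qquad
\int_0^1 t^{p+q+1}(1-t)\,dt=\frac{1}{(p+q+2)(p+q+3)}.
\]
This agrees with the moments of $\eta_{A,B}$. Both measures are finite signed measures supported in the compact set $K$; here we use that $S_t$ maps $K$ into $K$ and that $\zeta_{A,B}$ is supported in $K$ by Theorem~\ref{thm:2.1}. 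The Stone--Weierstrass argument of Proposition~\ref{prop:2.6} therefore yields \eqref{eq:37}. Positivity of $\eta_{A,B}$ follows at once, since it is an average of positive measures.

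\emph{Density and mass.} The pushforward $(S_t)_\#\zeta_{A,B}$ has density $t^{-2}k_{A,B}(a/t,s/t)$. Tonelli's theorem then gives
\[
\eta_{A,B}(G)=\int_G\int_0^1 t(1-t)t^{-2}k_{A,B}(a/t,s/t)\,dt\,d(a,s),
\]
which is \eqref{eq:38}. One should check measurability of the joint integrand; this follows from the measurability of $k_{A,B}$ established in the proof of Theorem~\ref{thm:2.1}. For the total mass, take $p=q=0$ to get $\eta_{A,B}(\mathbb R^2)=\tfrac16\zeta_{A,B}(\mathbb R^2)$, which equals $\tfrac1{12}\|B-D\|_2^2$ by \eqref{eq:3}. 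The equivalence $\eta_{A,B}=0\iff AB=BA$ follows from this together with positivity.

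\emph{Main obstacle.} The delicate point is justifying \eqref{eq:35} for all moments, including $n+m\ge1$ but with $q+2\ge2$, together with the claimed support of $\mu_{A,B}$ in $K$. That support is what makes both $J$ measures compactly supported in $K$ and legitimizes the moment-uniqueness argument. Since \eqref{eq:35} and the support statement are given earlier in the section, I would rely on them and verify only that $\|D\|\le\|B\|$, so that $J_{A,D}$ lives on the same $K$.
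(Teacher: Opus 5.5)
Your proposal is correct and follows the paper's proof essentially step for step. You compute the moments of $\eta_{A,B}$ from \eqref{eq:35}, \eqref{eq:36} and \eqref{eq:31}, match them with the Beta integral $\int_0^1 t^{p+q+1}(1-t)\,dt$ for the right side of \eqref{eq:37}, conclude by polynomial density on $K$, and then read off the density from the change of variables under $S_t$ and the total mass from the $(0,0)$ moment. Your additional checks ($\|D\|\le\|B\|$ so that $J_{A,D}$ lives on $K$, $S_t(K)\subseteq K$, and joint measurability for Tonelli) supply details the paper leaves implicit.
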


\begin{proof}
Write
\[
G_{p,m}=\mathcal A_{p,m}(A,B)-\mathcal A_{p,m}(A,D).
\]
By \eqref{eq:31},
\[
\int_{\mathbb R^2}a^pb^q\,d\zeta_{A,B}(a,b)
=\frac{G_{p,q+2}}{(q+1)(q+2)}.
\]
On the other hand, \eqref{eq:35} and \eqref{eq:36} give
\[
\int_{\mathbb R^2}a^ps^q\,d\eta_{A,B}(a,s)
=\frac{G_{p,q+2}}
{(p+q+2)(p+q+3)(q+1)(q+2)}.
\]
The right side of \eqref{eq:37} has the same mixed moments,
since
\[
\int_0^1t^{p+q+1}(1-t)\,dt
=\frac1{(p+q+2)(p+q+3)}.
\]
Both measures are compactly supported and finite, so
polynomial approximation proves \eqref{eq:37}.
The Jacobian of $S_t$ equals $t^2$, and a change of variables
in \eqref{eq:37} gives \eqref{eq:38}.
Finally, \eqref{eq:3} and
$\int_0^1t(1-t)\,dt=1/6$ give \eqref{eq:39}.
The last assertion follows from \eqref{eq:3}.
\end{proof}

\end{document}